\title[]{The Hessian of quantized Ding functionals and its asymptotic behavior}
\author[]{Ryosuke Takahashi}
\address{Mathematical Institute, Tohoku University, 6-3, Aoba, Aramaki, Aoba-ku, Sendai, 980-8578, Japan}
\email{ryosuke.takahashi.a7@tohoku.ac.jp}
\keywords{Ding functional, quantization, K\"ahler-Einstein metric}
\thanks{This work was supported by Grant-in-Aid for JSPS Fellows Number 16J01211.}
\subjclass[2010]{53C25}
\theoremstyle{definition}
\newtheorem*{acknowledgements}{Acknowledgements}
\theoremstyle{plain}
\newtheorem{theorem}{Theorem}[section]
\newtheorem{lemma}{Lemma}[section]
\newtheorem{corollary}{Corollary}[section]
\theoremstyle{remark}
\newtheorem{remark}{Remark}[section]
\begin{document}
\begin{abstract}
We compute the Hessian of quantized Ding functionals and give an elementary proof for the convexity of quantized Ding functionals along Bergman geodesics from the view point of projective geometry. We study also the asymptotic behavior of the Hessian using the Berezin-Toeplitz quantization.
\end{abstract}
\maketitle
\section{Introduction} 
Let $X$ be an $n$-dimensional Fano manifold and $k$ a large integer such that $-kK_X$ is very ample. Let ${\mathcal H}(-K_X)$ be the space of smooth fiber metrics $\phi$ on $-K_X$ with positive curvature
$\omega_{\phi}:=(\sqrt{-1}/2 \pi) \partial \bar{\partial} \phi$
and ${\mathcal B}_k$ the space of hermitian forms on $H^0(X,-kK_X)$. For a metric $\phi \in {\mathcal H}(-K_X)$, we denote the Monge-Amp\`ere volume form by
\[
MA(\phi):=\frac{\omega_{\phi}^n}{n!},
\]
and the canonical volume form by
\[
e^{-\phi}:=(\sqrt{-1})^{n^2} \left|\frac{\partial}{\partial z_1} \wedge \cdots \wedge \frac{\partial}{\partial z_n} \right|_{\phi}^2 dz_1 \wedge \cdots \wedge dz_n \wedge d\bar{z}_1 \wedge \cdots \wedge d\bar{z}_n.
\]
This expression is readily verified to be independent of the local holomorphic coordinates $(z_1, \ldots, z_n)$ and hence defines a volume form on $X$. We normalize $e^{-\phi}$ to be a probability measure
\[
\mu_{\phi}:=\frac{e^{-\phi}}{\int_X e^{-\phi}}.
\]
The space ${\mathcal H}(-K_X)$ admits a natural Riemannian metric, called {\it Mabuchi metric}, defined by the $L^2$-norm of a tangent vector $f \in C^{\infty}(X,{\mathbb R})$ at $\phi$: $|f|_{\phi}^2:=\int_X f^2 MA(\phi)$. 

Recall that {\it Ding functionals} ${\mathcal D}$ on the space of infinite dimensional Riemannian manifold ${\mathcal H}(-K_X)$ is uniquely characterized (modulo an additive constant) by the property
\[
\nabla {\mathcal D}_{|\phi}=\frac{\mu_{\phi}}{MA(\phi)}-\frac{n!}{(-K_X)^n},
\]
where $\nabla$ denotes the gradient and $(-K_X)^n$ is the top intersection number. The Ding functionals are important in the study of K\"ahler-Einstein metrics. For instance, Donaldson \cite{Don15} recently gave a ``moment map'' interpretation of the Ding functional. More precisely, he showed that the ratio of volumes $\mu_{\phi}/MA(\phi)-n!/(-K_X)^n$ arises as the moment map for a suitable infinite dimensional symplectic manifold and the Ding functional can be viewed as the Kempf-Ness function. This interpretation provides us the direct link between the existence problem of K\"ahler-Einstein metrics and stability in Geometric Invariant Theory.

On the other hand, quantization of the Ding functionals is also studied: given $\phi \in {\mathcal H}(-K_X)$, we define a hermitian form $Hilb_k(\phi) \in {\mathcal B}_k$ by
\[
\|s\|_{Hilb_k(\phi)}^2:=\int_X |s|_{k \phi}^2 MA(\phi).
\]
Conversely, for a given $H \in {\mathcal B}_k$, we define a metric $FS_k(H) \in {\mathcal H}(-K_X)$ by
\[
FS_k(H):= \frac{1}{k} \log \left( k^{-n} \sup_{s \in H^0(X,-kK_X) \backslash \{0\}} \frac{|s|^2}{H(s,s)} \right).
\]
In what follows, we fix some $H_0 \in {\mathcal B}_k$ and a reference $H_0$-ONB $\underline{s}_0:=(s_1, \ldots, s_{N_k})$, which defines an embedding $\iota_{\underline{s}_0} \colon X \hookrightarrow {\mathbb CP}^{N_k-1}$, where $N_k:=\dim H^0(X,-kK_X)$. For $g \in GL(N_k, {\mathbb C})$, let $H_g \in {\mathcal B}_k$ be a hermitian form such that $\underline{s}_0 \cdot g$ is an $H_g$-orthonormal basis. Then the map $g \mapsto H_g$ gives an isomorphism $GL(N_k, {\mathbb C})/U(N_k) \simeq {\mathcal B}_k$, and the tangent space of ${\mathcal B}_k$ can be identified with $\sqrt{-1}{\mathfrak u}(N_k)$. Thus the space ${\mathcal B}_k$ admits a natural Riemannian structure defined by the Killing form ${\rm tr}(AB)$ ($A, B \in \sqrt{-1}{\mathfrak u}(N_k)$) at each tangent space.

Write $M \colon {\mathbb CP}^{N_k-1} \to \sqrt{-1}{\mathfrak u}(N_k)$ for the map:
\[
M([Z_1; \cdots ; Z_{N_k}]):=\left( \frac{\overline{Z}_{\alpha} Z_{\beta}}{\sum_i |Z_i|^2} \right)_{\alpha \beta}.
\]
We define the {\it center of mass} $\underline{M}(g)$ by the formula
\[
\underline{M}(g):=\int_X (M \circ g) \mu_{FS_k(H_g)},
\]
where we identify $X$ with $\iota_{\underline{s}_0} (X)$ and the measure $\mu_{FS_k(H_g)}$ with its push forward $(\iota_{\underline{s}_0})_{\ast} \mu_{FS_k(H_g)}$.
{\it Quantized Ding functionals} ${\mathcal D}^{(k)}$ on the space of finite dimensional Riemannian manifold ${\mathcal B}_k$ is uniquely characterized (modulo an additive constant) by the property
\[
\nabla {\mathcal D}^{(k)} \hbox{}_{|H_g}=k^{-1} \left(\underline{M}(g)-\frac{\rm Id}{N_k} \right).
\]
There is also a finite dimensional moment-map picture for this setting: the space of all bases ${\mathcal Z}^{(k)} \simeq GL(N_k)$ is equipped with a K\"ahler structure induced from the Berndtsson metric and $U(N_k)$ acts on ${\mathcal Z}^{(k)}$ isometrically with a moment map which is essentially $\underline{M}(g)-{\rm Id}/N_k$. Critical points of the quantized Ding functionals $\underline{M}(g)={\rm Id}/N_k$ are called {\it anti-canonically balanced metrics}. There is a strong connection between the existence problem of anti-canonically balanced metrics and K\"ahler-Einstein metrics (cf. \cite[Theorem 7.1]{BBGZ13}).

In this paper, we study the Hessian of quantized Ding functionals and its asymptotic behavior as raising exponent $k \to \infty$. We first give a formula for the Hessian of the quantized Ding functional $\nabla^2 {\mathcal D}^{(k)}$:
\begin{theorem} \label{fhd}
The Hessian of the quantized Ding functional is computed by
\begin{eqnarray*}
\nabla^2 {\mathcal D}^{(k)} \hbox{}_{|H_0}(A,B)&=&k^{-1} \int_X {\rm Re}(\xi_A, \xi_B)_{FS} \mu_{FS_k(H_0)}-k^{-2} \int_XH(A)H(B) \mu_{FS_k(H_0)} \\
&+&k^{-2} \int_X H(A) \mu_{FS_k(H_0)} \cdot \int_X H(B) \mu_{FS_k(H_0)},
\end{eqnarray*}
where $\xi_A$ denotes the holomorphic vector filed on ${\mathbb CP}^{N_k-1}$ corresponding to $A$, and $H(A)$ is the Hamiltonian for the Killing vector $J\xi_A^{\mathbb R}$, $(\cdot,\cdot)_{FS}$ is the Fubini-Study inner product on tangent vectors.
\end{theorem}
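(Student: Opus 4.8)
The plan is to reduce the computation of the Hessian to a second derivative along a geodesic and then to polarize. Since ${\mathcal B}_k \simeq GL(N_k,\mathbb{C})/U(N_k)$ is a symmetric space whose geodesics through $H_0$ are $\gamma_A(s):=H_{\exp(sA)}$ with $\dot\gamma_A(0)=A$, for any smooth function one has $\nabla^2 {\mathcal D}^{(k)}{}_{|H_0}(A,A)=\frac{d^2}{ds^2}\big|_{s=0}{\mathcal D}^{(k)}(H_{\exp(sA)})$, and the full bilinear form is recovered from this diagonal by polarization. First I would use the characterizing property of the gradient, together with the fact that $A$ commutes with $\exp(sA)$ (so the velocity of $\gamma_A$ stays the constant $A$), to write $\frac{d}{ds}{\mathcal D}^{(k)}(H_{\exp(sA)})=k^{-1}\,\mathrm{tr}\!\left[(\underline{M}(\exp(sA))-\mathrm{Id}/N_k)A\right]$. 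Differentiating once more and noting $\mathrm{tr}(\underline{M}(g)A)=\int_X H(A)(g\cdot x)\,\mu_{FS_k(H_g)}$, where one checks $H(A)([Z])=\mathrm{tr}(M([Z])A)=\langle AZ,Z\rangle/|Z|^2$ up to the chosen normalization, reduces everything to $\frac{d}{ds}\big|_{s=0}$ of this integral along $g=\exp(sA)$.

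By the product rule this derivative splits into a kinematic term, in which the point moves, $\int_X \frac{d}{ds}\big|_0 H(A)(\exp(sA)\cdot x)\,\mu_{FS_k(H_0)}$, and a measure term, in which the point is frozen and the measure varies, $\int_X H(A)\,\frac{d}{ds}\big|_0\mu_{FS_k(H_{\exp(sA)})}$. For the kinematic term I would differentiate $H(A)(\exp(sA)\cdot[Z])=\langle A\exp(sA)Z,\exp(sA)Z\rangle/|\exp(sA)Z|^2$ directly; using that $A$ is Hermitian this yields a multiple of the variance expression $|AZ|^2/|Z|^2-|\langle AZ,Z\rangle|^2/|Z|^4$, which is exactly $|\xi_A|^2_{FS}=\mathrm{Re}(\xi_A,\xi_A)_{FS}$. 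Equivalently, this is the moment-map identity $dH(A)=\iota_{J\xi_A^{\mathbb R}}\omega_{FS}$ evaluated on $\xi_A^{\mathbb R}$, giving $\omega_{FS}(J\xi_A^{\mathbb R},\xi_A^{\mathbb R})=|\xi_A^{\mathbb R}|^2_{FS}$; this produces the first term $k^{-1}\int_X \mathrm{Re}(\xi_A,\xi_A)_{FS}\,\mu_{FS_k(H_0)}$.

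For the measure term I would first compute the first variation of the potential: writing $FS_k(H_{\exp(sA)})$ in the reference frame $\underline{s}_0$ as $\tfrac1k\log(k^{-n}|\exp(sA)^\ast Z|^2)$ and differentiating gives $\frac{d}{ds}\big|_0 FS_k(H_{\exp(sA)})=(\mathrm{const}/k)\,H(A)$. Feeding this into $\mu_\phi=e^{-\phi}/\int_X e^{-\phi}$ and differentiating the normalized canonical measure yields $\frac{d}{ds}\big|_0\mu_{FS_k(H_{\exp(sA)})}=(\mathrm{const}/k)\,\bigl(-H(A)+\int_X H(A)\,\mu_{FS_k(H_0)}\bigr)\,\mu_{FS_k(H_0)}$. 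Integrating $H(A)$ against this reproduces precisely the two remaining terms $-k^{-2}\int_X H(A)^2\mu_{FS_k(H_0)}+k^{-2}\bigl(\int_X H(A)\mu_{FS_k(H_0)}\bigr)^2$; the extra factor $k^{-1}$ here comes from the $\tfrac1k$ in $FS_k$, which is exactly why these terms are of lower order in $k$. Finally, polarizing in $A$ (using $\xi_{A+B}=\xi_A+\xi_B$ and the linearity $H(A+B)=H(A)+H(B)$) turns the diagonal expressions into the stated bilinear form in $\mathrm{Re}(\xi_A,\xi_B)_{FS}$ and $H(A)H(B)$.

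The step I expect to be most delicate is not the structure but the bookkeeping of the overall normalization constant. Several conventions must be reconciled at once: the doubling $gg^\ast=\exp(2sA)$ that enters both the kinematic and the measure terms, the normalization $\mathrm{tr}(AB)$ of the metric on ${\mathcal B}_k$ together with the identification of $A$ with $\dot\gamma_A(0)$, and the precise normalizations of $H(A)$ and of the Fubini--Study inner product $(\cdot,\cdot)_{FS}$ relative to $\omega_\phi=(\sqrt{-1}/2\pi)\partial\bar\partial\phi$. These enter every term through one and the same scalar, so the essential task is to verify that they conspire to give the coefficient $1$ in front of each term and to fix the sign in the moment-map identity so that the kinematic term indeed appears with a $+$ sign.
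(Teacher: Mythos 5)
Your proposal is correct in substance and follows essentially the same route as the paper. The paper likewise computes the second derivative of ${\mathcal D}^{(k)}$ along a Bergman geodesic $t \mapsto H_{g(t)}$ and splits it by the product rule into exactly your two pieces: the kinematic term, identified with $k^{-1}\int_X |\xi_A|_{FS}^2\,\mu_{FS_k(H_0)}$ via the moment-map identity ${\rm tr}(dM(\xi_A^{\mathbb R})\cdot A) = -4\pi\,\omega_{FS}(J\xi_A^{\mathbb R},\xi_A^{\mathbb R}) = |\xi_A|_{FS}^2$, and the measure-variation term, computed from $\frac{d}{dt}FS_k(H_{g(t)}) = k^{-1}{\rm tr}((M\circ g(t))\cdot A)$ exactly as you describe; it then polarizes (Lemma \ref{dfl}). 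The only organizational difference is that you start from the gradient characterization $\nabla {\mathcal D}^{(k)}{}_{|H_g} = k^{-1}(\underline{M}(g)-{\rm Id}/N_k)$, whereas the paper works from the explicit formula ${\mathcal D}^{(k)} = -{\mathcal E}^{(k)} + {\mathcal L}\circ FS_k$ and proves separately that ${\mathcal E}^{(k)}$ is affine along Bergman geodesics (Lemma \ref{fma}); these amount to the same thing, since ${\rm tr}(({\rm Id}/N_k)A)$ is constant along the geodesic. Your identification of the variance expression with $|\xi_A|_{FS}^2$ is precisely the paper's Lemma \ref{flm} with $B=A$.

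The normalization issue you flagged is real and should be settled rather than left as bookkeeping. Since $H_g$ depends on $g$ only through $gg^{\ast}$, the curve $\gamma_A(s) = H_{\exp(sA)}$ has velocity $2A$, not $A$, in the identification of $T_{H_0}{\mathcal B}_k$ with $\sqrt{-1}{\mathfrak u}(N_k)$ that makes the stated gradient formula compatible with the metric ${\rm tr}(AB)$. As written, your first-variation formula therefore drops a factor of $2$, while your kinematic derivative produces twice the variance (the ``multiple'' you mention is $2$) and your variation of $FS_k$ produces $2k^{-1}H(A)$; carried through consistently, this computes $\frac{d^2}{ds^2}{\mathcal D}^{(k)}(H_{\exp(sA)}) = 4\,\nabla^2{\mathcal D}^{(k)}{}_{|H_0}(A,A)$ rather than the Hessian itself. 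The paper resolves this at the outset by taking $g(t) = e^{\frac{1}{2}tA}$, so that $g(t)g(t)^{\ast} = e^{tA}$ and the geodesic's velocity is exactly $A$; with that single change all of your constants come out equal to $1$ and your argument reproduces the paper's proof.
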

As a corollary, we will show the following:
\begin{corollary} \label{coq}
We have $\nabla^2 {\mathcal D}^{(k)} \hbox{}_{|H_0}(A,A) \geq 0$ for any $A \in \sqrt{-1}{\mathfrak u}(N_k)$, and the equality holds if and only if $A \in {\rm Lie}({\rm Aut}(X,-kK_X))$, where ${\rm Aut}(X,-kK_X)$ denotes the group of holomorphic automorphisms of the pair $(X,-kK_X)$, embedded into $GL(N_k,{\mathbb C})$ by means of the reference basis $\underline{s}_0$.
\end{corollary}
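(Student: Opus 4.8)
The plan is to specialize Theorem \ref{fhd} to $B=A$ and recognize the outcome as a sum of two manifestly nonnegative quantities. Writing $\phi:=FS_k(H_0)$ and $\mu:=\mu_{FS_k(H_0)}$, the first term becomes $k^{-1}\int_X|\xi_A|_{FS}^2\,\mu$, while the last two terms combine into $-k^{-2}\,{\rm Var}_{\mu}(H(A))$, where ${\rm Var}_{\mu}$ is the variance with respect to the probability measure $\mu$. Hence
\begin{equation*}
\nabla^2 {\mathcal D}^{(k)}\hbox{}_{|H_0}(A,A)=k^{-2}\Big(k\int_X|\xi_A|_{FS}^2\,\mu-{\rm Var}_{\mu}(H(A))\Big),
\end{equation*}
and everything reduces to proving $k\int_X|\xi_A|_{FS}^2\,\mu\ge {\rm Var}_{\mu}(H(A))$ together with the characterization of equality.

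First comes the projective-geometric step. Since $\iota_{\underline{s}_0}(X)\subset{\mathbb CP}^{N_k-1}$ is a complex submanifold, the restriction $\xi_A|_X$ decomposes $\omega_{FS}$-orthogonally into tangential and normal $(1,0)$-parts, $\xi_A|_X=\xi_A^{\top}+\xi_A^{\perp}$, so that $|\xi_A|_{FS}^2=|\xi_A^{\top}|_{FS}^2+|\xi_A^{\perp}|_{FS}^2$. The Hamiltonian relation $\bar\partial H(A)=\omega_{FS}(\xi_A,\,\cdot\,)$ restricts along $X$ to $\bar\partial(H(A)|_X)=(\iota_{\underline{s}_0}^*\omega_{FS})(\xi_A^{\top},\,\cdot\,)$, since the normal part pairs trivially with $T^{0,1}X$; hence $|\xi_A^{\top}|_{FS}^2=|\bar\partial(H(A)|_X)|_{\iota_{\underline{s}_0}^*\omega_{FS}}^2$. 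Because $\iota_{\underline{s}_0}^*\omega_{FS}=k\,\omega_{\phi}$, rescaling gives $k\,|\xi_A^{\top}|_{FS}^2=|\bar\partial(H(A)|_X)|_{\omega_{\phi}}^2$, whence
\begin{equation*}
k\int_X|\xi_A|_{FS}^2\,\mu=\int_X\big|\bar\partial(H(A)|_X)\big|_{\omega_{\phi}}^2\,\mu+k\int_X|\xi_A^{\perp}|_{FS}^2\,\mu.
\end{equation*}

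The analytic heart is a sharp weighted Poincaré inequality for the canonical measure. Since $e^{-\phi}$ is a hermitian metric on $-K_X$ whose curvature is $\omega_{\phi}$, a direct computation shows that the Bakry-\'Emery-Ricci form of the weighted manifold $(X,\omega_{\phi},\mu)$ is exactly $\omega_{\phi}$ itself: writing $\mu=e^{F}\,MA(\phi)$ with $F=\log(e^{-\phi}/MA(\phi))$, one has ${\rm Ric}(\omega_{\phi})-(\sqrt{-1}/2\pi)\partial\bar\partial F=\omega_{\phi}$. The associated weighted Bochner-Kodaira identity then reads, for every real $u\in C^{\infty}(X)$,
\begin{equation*}
\int_X|\bar\partial u|_{\omega_{\phi}}^2\,\mu-{\rm Var}_{\mu}(u)=\int_X\big|\bar\partial\,\nabla^{1,0}u\big|^2\,\mu\ge 0,
\end{equation*}
with equality precisely when $\nabla^{1,0}u$ is a holomorphic vector field on $X$. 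Applying this to $u=H(A)|_X$ and combining with the previous display yields $k\int_X|\xi_A|_{FS}^2\,\mu-{\rm Var}_{\mu}(H(A))=\int_X|\bar\partial\,\nabla^{1,0}(H(A)|_X)|^2\,\mu+k\int_X|\xi_A^{\perp}|_{FS}^2\,\mu\ge 0$, which proves $\nabla^2{\mathcal D}^{(k)}\hbox{}_{|H_0}(A,A)\ge 0$.

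For the equality case, both nonnegative terms must vanish. Vanishing of the second forces $\xi_A^{\perp}\equiv 0$, i.e. $\xi_A$ is tangent to $X$; but then $\nabla^{1,0}(H(A)|_X)=\xi_A^{\top}=\xi_A|_X$ is the restriction of an ambient holomorphic field tangent to $X$, hence already holomorphic, so the first term vanishes automatically. Thus equality holds iff $\xi_A$ is tangent to $X$, equivalently iff the flow $e^{tA}$ preserves $\iota_{\underline{s}_0}(X)$ for all $t$, i.e. iff $A\in{\rm Lie}({\rm Aut}(X,-kK_X))$. The main obstacle is precisely this analytic heart: one must verify that the Bakry-\'Emery-Ricci form of the canonical measure is exactly $\omega_{\phi}$ and that the resulting sharp Poincaré constant equals $1$, so that it matches the relative weighting $k^{-1}$ versus $k^{-2}$ of the two terms in Theorem \ref{fhd}; any looser constant would fail to close the estimate.
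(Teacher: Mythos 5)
Your geometric reduction is exactly the paper's: you decompose $\xi_A|_X=\xi_A^{\top}+\xi_A^{\perp}$, use the Hamiltonian relation and the rescaling $\iota_{\underline{s}_0}^*\omega_{FS}=k\,\omega_{\phi}$ to identify the tangential contribution with $k^{-2}\nabla^2{\mathcal D}\hbox{}_{|\phi}(H(A),H(A))$, and reduce everything to the sharp weighted Poincar\'e inequality $\int_X|\partial u|_{\omega_\phi}^2\,\mu\ge{\rm Var}_\mu(u)$ with equality iff $\nabla^{1,0}u$ is holomorphic. The gap is in your ``analytic heart'': the claimed weighted Bochner--Kodaira identity
\begin{equation*}
\int_X|\bar\partial u|_{\omega_{\phi}}^2\,\mu-{\rm Var}_{\mu}(u)=\int_X\bigl|\bar\partial\,\nabla^{1,0}u\bigr|^2\,\mu
\end{equation*}
is false. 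Writing ${\mathcal L}$ for the (positive) weighted Laplacian self-adjoint with respect to $\mu$, the correct identity when the Bakry--\'Emery--Ricci form equals $\omega_{\phi}$ is
\begin{equation*}
\int_X\bigl|\bar\partial\,\nabla^{1,0}u\bigr|^2\,\mu=\int_X({\mathcal L}u)^2\,\mu-\int_X|\partial u|_{\omega_\phi}^2\,\mu,
\end{equation*}
so that for $\int_X u\,\mu=0$ one gets
\begin{equation*}
\int_X|\partial u|_{\omega_\phi}^2\,\mu-{\rm Var}_\mu(u)=\int_X\bigl|\bar\partial\,\nabla^{1,0}u\bigr|^2\,\mu-\int_X({\mathcal L}u-u)^2\,\mu,
\end{equation*}
a difference of two nonnegative terms, which is \emph{not} manifestly nonnegative. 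A concrete counterexample to your identity: on ${\mathbb CP}^1$ with the K\"ahler--Einstein metric in $c_1$, take $u$ a degree-two spherical harmonic, an eigenfunction of ${\mathcal L}$ with eigenvalue $3$; then the Poincar\'e deficit is $2\|u\|^2_{L^2(\mu)}$ while $\int|\bar\partial\nabla^{1,0}u|^2\mu=3\cdot 2\,\|u\|^2_{L^2(\mu)}=6\|u\|^2_{L^2(\mu)}$. Indeed, your identity can only hold on eigenfunctions with $\lambda\in\{0,1\}$. Since your proof of both the nonnegativity and the equality characterization rests entirely on this identity, the argument does not close.

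The inequality you need is of course true, but it requires a spectral argument rather than a single integrated identity: from the correct identity above, every nonzero eigenvalue $\lambda$ of ${\mathcal L}$ satisfies $\lambda(\lambda-1)\ge 0$, hence $\lambda\ge 1$, and expanding $u$ in eigenfunctions gives the Poincar\'e inequality, with equality iff $u$ (mod constants) lies in the $\lambda=1$ eigenspace, which coincides with the space of holomorphy potentials. This is exactly the modified Poincar\'e inequality of Tian--Zhu that the paper invokes through Remark \ref{frk} for the nonnegativity, and for the converse equality direction (``$A\in{\rm Lie}({\rm Aut}(X,-kK_X))$ implies vanishing'') the paper does not get it ``automatically'' as you assert: it differentiates the Ricci potential equation ${\rm Ric}(\omega_{FS_k(H_0)})-\omega_{FS_k(H_0)}=\frac{\sqrt{-1}}{2\pi}\partial\bar\partial h_{FS_k(H_0)}$ along $\xi_A^{\top}$ to show $H(A)$ satisfies ${\mathcal L}H(A)=H(A)$ up to a constant, hence achieves equality. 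Replacing your false identity either by a citation of \cite[Corollary 2.1]{TZ00} (as the paper does) or by the correct identity plus the spectral decomposition would repair the proof; as written, the central estimate is unproved.
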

Although Corollary \ref{coq} is a direct consequence of Berndtsson's convexity theorem \cite[Theorem 2.4]{Ber09} (see also \cite[Lemma 7.2]{BBGZ13}), our proof is completely independent, based on the viewpoint of projective geometry, and somewhat elementary.

Next, we fix a reference metric $\phi_0 \in {\mathcal H}(-K_X)$ and set $H_0:=Hilb_k(\phi_0)$. For $f \in C^{\infty}(X, {\mathbb R})$, we associate with the derivative of the Hilbert map in the direction $f$:
\[
Q_{f,k}:=\frac{d}{dt} Hilb_k(\phi_0-tf)_{|t=0}= \left( \int_X (kf-\Delta_{\phi_0} f)(s_{\alpha},s_{\beta})_{k\phi_0} MA(\phi_0) \right)_{\alpha \beta},
\]
where $\Delta_{\phi_0}$ denotes the (negative) $\bar{\partial}$-Laplacian with respect to $\phi_0$, and in the last equality, we identified $Q_{f,k}$ with a hermitian matrix by means of the reference basis $\underline{s}_0$. With this operation, we can connect $\nabla^2 {\mathcal D}^{(k)}$ to $\nabla^2 {\mathcal D}$ as follows:
\begin{theorem} \label{coh}
For any functions $f,g \in C^{\infty}(X,{\mathbb R})$, we have the convergence of the Hessian
\begin{equation} \label{che}
\nabla^2 {\mathcal D}^{(k)} \hbox{}_{|H_0} (Q_{f,k},Q_{g,k}) \to \nabla^2 {\mathcal D} \hbox{}_{|\phi_0} (f,g)
\end{equation}
as $k \to \infty$. In particular, $\lim_{k \to \infty} \nabla^2{\mathcal D}^{(k)} \hbox{}_{|H_0} (Q_{f,k},Q_{f,k})=0$ implies that the condition characterizing degeneracy $\nabla^2 {\mathcal D}\hbox{}_{|\phi_0}(f,f)=0$ follows. Finally, the above convergence is uniform when $f,g$ vary in a subset of $C^{\infty}(X,{\mathbb R})$ which is compact for the $C^{\infty}$-topology. It is also uniform for $\phi_0$ as long as $\phi_0$ stays a compact set in the $C^{\infty}$-toplogy.
\end{theorem}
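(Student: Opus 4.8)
The plan is to substitute $A=Q_{f,k}$ and $B=Q_{g,k}$ directly into the formula of Theorem \ref{fhd} and to analyze the three resulting terms separately through Berezin--Toeplitz asymptotics. The key starting observation is that, in the reference $H_0$-orthonormal basis, the matrix $Q_{f,k}$ is exactly the Toeplitz operator with contravariant symbol $kf-\Delta_{\phi_0}f$ relative to the Hilbert structure $H_0=Hilb_k(\phi_0)$. I would first record the two standard inputs I intend to use: (i) the Bouche--Tian--Yau--Zelditch expansion, which gives $FS_k(Hilb_k(\phi_0))\to\phi_0$ in $C^\infty$ and hence $\mu_{FS_k(H_0)}\to\mu_{\phi_0}$ together with $\tfrac1k\iota_{\underline{s}_0}^\ast\omega_{FS}\to\omega_{\phi_0}$; and (ii) the expansion of the Berezin transform $B_k=\mathrm{Id}+k^{-1}\Delta_{\phi_0}+O(k^{-2})$. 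Point (ii) is the reason the symbol is taken to be $kf-\Delta_{\phi_0}f$ rather than $kf$: the Laplacian correction is chosen precisely so that the covariant symbol satisfies $H(Q_{f,k})=B_k(kf-\Delta_{\phi_0}f)=kf+O(k^{-1})$ uniformly on $X$, the offending $O(1)$ term cancelling.

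Next I would treat the two lower-order terms. Since $H(A)(x)=\mathrm{tr}(M(x)A)$ is nothing but the covariant (Berezin) symbol of $A$, the estimate $H(Q_{f,k})=kf+O(k^{-1})$ gives $k^{-2}H(Q_{f,k})H(Q_{g,k})=fg+O(k^{-1})$; integrating against $\mu_{FS_k(H_0)}\to\mu_{\phi_0}$ and combining the second and third terms of Theorem \ref{fhd}, I obtain in the limit the negative covariance $-\int_X fg\,\mu_{\phi_0}+\int_X f\,\mu_{\phi_0}\int_X g\,\mu_{\phi_0}$. For the first term I would use that $\xi_A$ is the $(1,0)$-gradient of $H(A)$ for the Fubini--Study metric, so that $(\xi_A,\xi_B)_{FS}=\langle\bar\partial H(A),\bar\partial H(B)\rangle_{FS}$. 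Because $\iota_{\underline{s}_0}^\ast\omega_{FS}$ scales like $k\,\omega_{\phi_0}$, the inverse metric contributes a factor $k^{-1}$, while $\bar\partial H(Q_{f,k})=k\,\bar\partial f+O(1)$ contributes $k^{2}$; hence $k^{-1}(\xi_{Q_{f,k}},\xi_{Q_{g,k}})_{FS}\to\langle\bar\partial f,\bar\partial g\rangle_{\phi_0}$, and integration against $\mu_{FS_k(H_0)}$ yields $\int_X\mathrm{Re}\langle\bar\partial f,\bar\partial g\rangle_{\phi_0}\,\mu_{\phi_0}$. Adding the three contributions reproduces $\nabla^2{\mathcal D}_{|\phi_0}(f,g)=\int_X\mathrm{Re}\langle\bar\partial f,\bar\partial g\rangle_{\phi_0}\mu_{\phi_0}-\int_X fg\,\mu_{\phi_0}+\int_X f\,\mu_{\phi_0}\int_X g\,\mu_{\phi_0}$, which one verifies to be the classical Ding Hessian by the second-variation computation along a Mabuchi geodesic (the Monge--Amp\`ere energy being affine there, only the $\log\int e^{-\phi}$ term and its variance survive).

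The main obstacle is the first term, because it requires control of the Berezin symbols at the level of first derivatives, not merely pointwise: I must upgrade $H(Q_{f,k})=kf+O(k^{-1})$ to $\bar\partial H(Q_{f,k})=k\,\bar\partial f+O(1)$ with uniform remainder, which rests on the near-diagonal derivative asymptotics of the Bergman kernel and of the associated Toeplitz operators rather than on the scalar Berezin-transform expansion alone; tracking the precise normalizations in the factor $k$ produced by the interplay of the metric rescaling and these derivative asymptotics is where the care lies. Once the $C^1$-asymptotics are in place with remainders locally uniform in $\phi_0$, the limit (\ref{che}) and its uniformity follow: the maps $f\mapsto kf-\Delta_{\phi_0}f$ are continuous for the $C^\infty$-topology, and all Bergman and Toeplitz expansions are uniform for $\phi_0$ and for $(f,g)$ ranging over $C^\infty$-compact sets, which gives the asserted uniform convergence. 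The final claim, that $\lim_{k}\nabla^2{\mathcal D}^{(k)}_{|H_0}(Q_{f,k},Q_{f,k})=0$ forces $\nabla^2{\mathcal D}_{|\phi_0}(f,f)=0$, is then immediate from (\ref{che}).
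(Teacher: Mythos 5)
Your treatment of the second and third terms is fine and coincides with the paper's: the cancellation $H(Q_{f,k})=kf+O(k^{-1})$ (your Berezin-transform argument is Lemma \ref{ahf}, proved via Theorem \ref{afp}) plus $\mu_{FS_k(H_0)}\to\mu_{\phi_0}$ gives the covariance part of the limit. The gap is in the first term. The identity $(\xi_A,\xi_B)_{FS}=\langle\bar\partial H(A),\bar\partial H(B)\rangle_{FS}$ is an identity on the ambient ${\mathbb CP}^{N_k-1}$, involving derivatives of $H(A)$ in \emph{all} ambient directions; but $\xi_{Q_{f,k}}$ is not tangent to $X$, so write $\xi_{Q_{f,k}}=\xi_{Q_{f,k}}^{\top}+\xi_{Q_{f,k}}^{\bot}$ as in the proof of Corollary \ref{coq}. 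An asymptotic of the form $\bar\partial H(Q_{f,k})=k\,\bar\partial f+O(1)$ can only be a statement about the restriction $H(Q_{f,k})|_X$ and its derivatives \emph{along} $X$ (the function $f$ does not even exist off $X$), and such estimates control only the tangential component $\xi_{Q_{f,k}}^{\top}$; the conormal part of the ambient differential, equivalently $\xi_{Q_{f,k}}^{\bot}$, is invisible to them. Your scaling computation therefore establishes (modulo the $C^1$ asymptotics you flag) that $k^{-1}(\xi_{Q_{f,k}}^{\top},\xi_{Q_{g,k}}^{\top})_{FS}\to\langle\partial f,\partial g\rangle_{\phi_0}$, and silently discards $k^{-1}\int_X{\rm Re}(\xi_{Q_{f,k}}^{\bot},\xi_{Q_{g,k}}^{\bot})_{FS}\,\mu_{FS_k(H_0)}$. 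This discarded term is exactly the crux: by the computation in the proof of Corollary \ref{coq},
\[
\nabla^2 {\mathcal D}^{(k)}\hbox{}_{|H_0}(A,A)= k^{-2}\,\nabla^2 {\mathcal D}\hbox{}_{|FS_k(H_0)}\bigl(H(A),H(A)\bigr)+k^{-1}\int_X |\xi_A^{\bot}|_{FS}^2\,\mu_{FS_k(H_0)},
\]
so Theorem \ref{coh} is \emph{equivalent} (given your $C^1$ estimate) to the statement $k^{-1}\int_X|\xi_{Q_{f,k}}^{\bot}|_{FS}^2\,\mu_{FS_k(H_0)}\to 0$, which your proposal never addresses; even the ``main obstacle'' you identify (upgrading the symbol asymptotics to $C^1$) would not touch it, since it concerns derivatives along $X$ only.

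The paper circumvents this issue entirely, and needs no derivative asymptotics at all, by using Fine's identity (Lemma \ref{flm}): $H(A)H(B)+(\xi_A,\xi_B)_{FS}={\rm tr}(ABM)$. The problematic gradient term is thus absorbed into $k^{-1}\int_X {\rm tr}(Q_{f,k}Q_{g,k}M)\,\mu_{FS_k(H_0)}$, and on $X$ the integrand equals $K_k/\rho_k(\phi_0)$, where $K_k$ is the diagonal kernel of the \emph{composition} of the two Toeplitz operators. Its expansion is a purely scalar statement, Theorem \ref{act} of Ma--Marinescu, and the limit $\int_X|\partial f|_{\phi_0}^2\mu_{\phi_0}$ emerges algebraically from the subleading coefficient $b_{f,f,1}=\tfrac12 S_{\phi_0}f^2+2f\Delta_{\phi_0}f+\tfrac12|df|_{\phi_0}^2$ after the cancellations with $-2f\Delta_{\phi_0}f$ and $b_1 b_{f,f,0}$ --- not from differentiating any Berezin symbol. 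In other words, the normal component is handled implicitly because ${\rm tr}(ABM)$ packages $|\xi_A^{\top}|^2+|\xi_A^{\bot}|^2+H(A)^2$ into one quantity with known scalar asymptotics. To rescue your route you would need an independent proof that $k^{-1}\int_X|\xi_{Q_{f,k}}^{\bot}|_{FS}^2\,\mu_{FS_k(H_0)}\to 0$, e.g.\ via near-diagonal Bergman kernel estimates in the normal directions; that statement is essentially of the same depth as the theorem itself, so as written the proposal has a genuine gap.
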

This is an analogue of Berndtsson's result \cite[Theorem 4.1]{Ber09}, but quantization schemes are different. Moreover, his argument is based on Hodge theory, whereas an important technical tool we use in our proofs is Berezin-Toeplitz quantization provided by Ma-Marinescu \cite{MM12}. We should mention as well that J. Fine \cite{Fin12} studied the quantization of the Lichnerowicz operator on general polarized manifolds. Our method follows a strategy discovered by him.
\begin{acknowledgements}
The author would like to express his gratitude to his advisor Professor Shigetoshi Bando for useful discussions on this article. This research is supported by Grant-in-Aid for JSPS Fellows Number 16J01211.
\end{acknowledgements}
\section{Foundations}
\subsection{Functionals on the space of metrics}
We have a quick review on several functionals over the space of metrics ${\mathcal H}(-K_X)$ or ${\mathcal B}_k$ which play a central role in the study of K\"ahler-Einstein metrics. The standard reference for this section is \cite{BBGZ13}. We fix a reference metric $\phi_0 \in {\mathcal H}(-K_X)$. We define the {\it Monge-Amp\`ere energy} by
\[
{\mathcal E}(\phi):=\frac{1}{(n+1)(-K_X)^n} \sum_{i=1}^n \int_X (\phi-\phi_0) \omega_{\phi}^{n-i} \wedge \omega_{\phi_0}^i,
\]
and the Ding functional ${\mathcal D}$ by
\[
{\mathcal D}(\phi):=-{\mathcal E}(\phi)+{\mathcal L}(\phi), \;\;\; {\mathcal L}(\phi):=-\log \int_X e^{-\phi}.
\]
Let $h_{\phi}$ be the Ricci potential of $\omega_{\phi}$:
\[
h_{\phi}:=\log \frac{\mu_{\phi}}{MA(\phi)}.
\]
A direct computation implies that the derivative of ${\mathcal D}$ along a smooth curve $\phi_t$ in ${\mathcal H}(-K_X)$ is
\begin{eqnarray*}
\frac{d^2}{dt^2}{\mathcal D}(\phi_t)&=&- \int_X ( \ddot{\phi}_t-|\partial \dot{\phi_t}|_{\phi_t}^2) \left( \frac{n!}{(-K_X)^n} - e^{h_{\phi_t}} \right) MA(\phi) \\
&+& \int_X  |\partial \dot{\phi}_t|_{\phi_t}^2 \mu_{\phi_t} - \int_X \dot{\phi}_t^2 \mu_{\phi_t} + \left( \int_X \dot{\phi}_t \mu_{\phi_t} \right)^2.
\end{eqnarray*}
Hence the Hessian of ${\mathcal D}$ is
\begin{equation}
\nabla^2 {\mathcal D} \hbox{}_{|\phi} (f,g)=\int_X  {\rm Re}(\partial f, \partial g)_{\phi} \mu_{\phi} - \int_X fg \mu_{\phi} + \int_X f \mu_{\phi} \cdot \int_X g \mu_{\phi}.
\end{equation}
\begin{remark} \label{frk}
We find that the Hessian $\nabla^2 {\mathcal D}$ is non-negative by the modified Poincar\'e inequality on Fano manifolds (for instance, see \cite[Corollary 2.1]{TZ00}).
\end{remark}
Set $H_0:=Hilb_k(\phi_0)$. we also define the {\it quantized Monge-Amp\`ere energy} by
\[
{\mathcal E}^{(k)}(H):=-\frac{1}{kN_k} \log \det (H \cdot H_0^{-1}),
\]
and the {\it quantized Ding functional} by
\[
{\mathcal D}^{(k)}(H):=-{\mathcal E}^{(k)}(H)+({\mathcal L} \circ FS_k)(H).
\]
\subsection{Berezin-Toeplitz quantization}
The key technical result that we use in the proof of Theorem \ref{coh} is the asymptotic expansion of the Bergman function and their generalizations. For $\phi_0 \in {\mathcal H}(-K_X)$, the {\it Bergman function} $\rho_k(\phi_0) \colon X \to {\mathbb R}$ is defined by
\[
\rho_k (\phi_0):=\sum_i |s_i|_{k \phi_0}^2,
\]
where $(s_i)$ is a $Hilb_k(\phi_0)$-ONB of $H^0(X,-kK_X)$. The central result for the Bergman function concerns the large $k$ asymptotic of $\rho_k(\phi_0)$, obtained by Bouche \cite{Bou90}, Catlin \cite{Cat99}, Tian \cite{Tia90} and Zelditch \cite{Zel98}:
\begin{theorem} \label{abf}
We have the following asymptotic expansion of the Bergman function:
\[
\rho_k (\phi_0) = b_0 k^n + b_1 k^{n-1} + b_2 k^{n-2} + \cdots,
\]
where each coefficient $b_i$ can be written as a polynomial in the Riemannian curvature ${\rm Riem}(\omega_{\phi_0})$, their derivatives and contractions with respect to $\omega_{\phi_0}$. In particular,
\[
b_0=1, \;\;\; b_1= \frac{1}{2} S_{\phi_0},
\]
where $S_{\phi_0}$ is the scalar curvature of $\omega_{\phi_0}$. The above expansion is uniform as long as $\phi_0$ stays in a compact set in the $C^{\infty}$-topology. More precisely, for any integer $p$ and $l$, there exists a constant $C_{p, l}$ such that
\[
\left\| \rho_k (\phi_0) - \sum_{i=0}^p b_i k^{n-i} \right\|_{C^l} < C_{p,l} \cdot k^{n-p-1}.
\]
We can take the constant $C_{p, l}$ independently of $\phi_0$ as long as $\phi_0$ stays in a compact set in the $C^{\infty}$-topology.
\end{theorem}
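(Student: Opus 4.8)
Theorem~\ref{abf} is classical, and since it enters the paper only as an input I would simply recall its proof via the analytic localization of the Bergman kernel, in the form best adapted to the uniform remainder estimates claimed here --- essentially the rescaling method of Ma--Marinescu \cite{MM12}, going back to Tian \cite{Tia90}, Catlin \cite{Cat99} and Zelditch \cite{Zel98}. Write $\Box_k=\bar\partial^*\bar\partial$ for the Kodaira Laplacian on $(-kK_X)$-valued $(0,q)$-forms, so that $\rho_k(\phi_0)(x)$ is the pointwise squared norm of $P_k(x,x)$, where $P_k$ is the Schwartz kernel of the orthogonal projection onto $\ker\Box_k|_{(0,0)}=H^0(X,-kK_X)$. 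The starting point is a \emph{spectral gap}: by the Bochner--Kodaira--Nakano identity and positivity of $\omega_{\phi_0}$, the smallest nonzero eigenvalue of $\Box_k$ on $(0,1)$-forms is $\ge c_0k$ with $c_0>0$ controlled by a lower bound for $\omega_{\phi_0}$ and by finitely many derivatives of $\phi_0$. Combined with finite-propagation-speed estimates for $\cos(t\sqrt{\Box_k})$ (or with Agmon-type weighted estimates), the gap forces $P_k(x,y)=O(k^{-\infty})$ in every $C^l$-norm once $d(x,y)$ exceeds a fixed constant, and shows that near the diagonal $P_k$ is insensitive, modulo $O(k^{-\infty})$, to alterations of the geometry away from $x$. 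Hence the expansion is a purely local assertion and every constant depends only on a fixed finite jet of $\phi_0$ --- which is precisely what the stated uniformity over $C^\infty$-compact families will come down to.

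Next I would rescale at a point $x_0$: choose K\"ahler normal coordinates $z$ and a holomorphic frame of $-K_X$ with $\phi_0=|z|^2+\tfrac14 R_{i\bar jk\bar l}z_i\bar z_jz_k\bar z_l+O(|z|^5)$, and set $z=u/\sqrt{k}$. The conjugated, rescaled operator $k^{-1}\Box_k$ then expands as $\mathcal L_0+\sum_{r\ge1}k^{-r/2}\mathcal L_r$, where $\mathcal L_0$ is the model Laplacian on $\mathbb C^n$ of the flat metric with constant positive curvature form (a direct sum of harmonic oscillators, whose Bergman kernel is the explicit Bargmann--Fock kernel, equal to $1$ on the diagonal after the normalizations fixed in the statement), and where each $\mathcal L_r$ is second-order with coefficients polynomial in $u,\bar u$ built from ${\rm Riem}(\omega_{\phi_0})(x_0)$ and its covariant derivatives. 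This already gives $b_0=1$. Plugging an ansatz $P_k\sim\sum_{r\ge0}k^{n-r/2}\mathcal P_r$ into the idempotency and reproducing relations for the rescaled projector and matching powers of $k^{-1/2}$ produces a recursion $\mathcal L_0\mathcal P_r=(\text{an explicit expression in }\mathcal P_{<r}\text{ and }\mathcal L_{\le r})$, solvable because $\mathcal L_0$ is invertible on $(\ker\mathcal L_0)^\perp$; hence $\mathcal P_r(0,0)$ is a universal polynomial in the curvature of $\omega_{\phi_0}$ and its derivatives at $x_0$, contracted by $\omega_{\phi_0}$, and a parity argument in $u\mapsto-u$ kills the odd half-integer powers on the diagonal, so the expansion runs over integer powers of $k$.

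To identify $b_1$ I would either evaluate $\mathcal P_1(0,0)$ directly or, more cheaply, integrate the expansion against $MA(\phi_0)$ and compare with Hirzebruch--Riemann--Roch for $\dim H^0(X,-kK_X)=\int_X\rho_k(\phi_0)\,MA(\phi_0)$, whose $k^{n-1}$ term is $\tfrac12\int_X S_{\phi_0}\,MA(\phi_0)$: as $b_1$ is a local curvature invariant of the right weight, this forces $b_1=\tfrac12 S_{\phi_0}$. It then remains to turn the formal series into a genuine asymptotic with quantitative remainder: one assembles an approximate projector from the truncated kernel, estimates its difference from $P_k$ by $O(k^{n-p-1})$ in the relevant $C^l$ operator norm using the localization of the first step together with interior elliptic estimates for $\Box_k$, and reads off $\big\|\rho_k(\phi_0)-\sum_{i=0}^p b_ik^{n-i}\big\|_{C^l}<C_{p,l}\,k^{n-p-1}$.

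I expect the main obstacle to be not the algebra --- the model computation, the recursion and the evaluation of $b_1$ are routine once the framework is set up --- but the \emph{bookkeeping of uniformity}: the spectral gap $c_0$, the radius on which normal coordinates and the rescaling are controlled, and all the Sobolev and elliptic constants must be quantified in terms of a single fixed finite jet of $\phi_0$, so that $C_{p,l}$ can be chosen uniformly as $\phi_0$ ranges over a $C^\infty$-compact family. Carrying this dependence through every step (or, in the write-up, simply invoking the versions of the expansion in \cite{MM12} and the references therein, where the uniformity is already in place) is where the real work lies.
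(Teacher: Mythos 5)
The paper offers no proof of Theorem \ref{abf}: it is imported as a classical result with citations to \cite{Bou90}, \cite{Cat99}, \cite{Tia90}, \cite{Zel98} (and the uniform version to \cite{MM12}), so there is no internal argument to compare yours against. Your sketch --- spectral gap and off-diagonal decay, rescaling to the Bargmann--Fock model, the recursion for the coefficients, and the Hirzebruch--Riemann--Roch identification of $b_1$ --- is a correct outline of the standard proof from those references, and your closing observation that in the write-up one would simply invoke them is precisely what the paper does.
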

Another important technical tool in our proofs is provided by the Berezin-Toeplitz quantization \cite{MM12}. For $f \in C^{\infty}(X, {\mathbb R})$, the Berezin-Toeplitz operator $T_{f,k}$ is a sequence of linear operators
\[
T_{f,k} \colon H^0(X,-kK_X) \to H^0(X,-kK_X)
\]
defined as two steps: first multiply a given section by $f$, then project to the space of holomorphic sections using the $L^2$-inner product $Hilb_k(\phi_0)$. Using the $Hilb_k(\phi_0)$-ONB $(s_i)$, we obtain the explicit description of the kernel:
\[
\widetilde{K}_{f,k}(x,y)=\sum_{\alpha, \beta} \int_X f(z) (s_{\alpha},s_{\beta})_{k \phi_0}(z) s_{\beta}(y) \otimes s_{\alpha}^{\ast}(x) MA(\phi_0)(z).
\]
If we restrict $\widetilde{K}_{f,k}$ to the diagonal, we have
\[
K_{f,k}(x):=\widetilde{K}_{f,k}(x,x)=\sum_{\alpha, \beta} \int_X f(z) (s_{\alpha}, s_{\beta})_{k \phi_0}(z) (s_{\beta}, s_{\alpha})_{k \phi_0}(x) MA(\phi_0)(z).
\]
\begin{theorem}[\cite{MM12}] \label{afp}
We have the following asymptotic expansion:
\[
\rho_k (\phi_0) = b_{f,0} k^n + b_{f,1} k^{n-1} + \cdots
\]
for smooth functions $b_{f,j}$. Moreover, there are the following formula for coefficients:
\begin{eqnarray*}
b_{f,0}&=&f,\\
b_{f,1}&=&\frac{1}{2}S_{\phi_0} f+\Delta_{\phi_0} f.
\end{eqnarray*}
The expansion is uniform in $f$ varying in a subset of $C^{\infty}(X,{\mathbb R})$ which is compact for the $C^{\infty}$-topology. It is also uniform for $\phi_0$ as long as $\phi_0$ stays a compact set in the $C^{\infty}$-toplogy.
\end{theorem}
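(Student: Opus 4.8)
The statement is the diagonal Berezin--Toeplitz expansion of Ma--Marinescu \cite{MM12}, and the plan is to derive it from the near-diagonal asymptotics of the Bergman kernel. Writing $P_k(x,y) = \sum_i s_i(x) \otimes \overline{s_i(y)}$ for the reproducing kernel of $H^0(-kK_X)$ with respect to $Hilb_k(\phi_0)$, the orthonormality $\int_X (s_\alpha, s_\beta)_{k\phi_0}\, MA(\phi_0) = \delta_{\alpha\beta}$ lets me rewrite the kernel on the diagonal as
\begin{equation*}
K_{f,k}(x) = \int_X f(z)\, \|P_k(x,z)\|_{k\phi_0}^2\, MA(\phi_0)(z),
\end{equation*}
where $\|P_k(x,z)\|_{k\phi_0}^2$ denotes the pointwise squared norm of the kernel in both variables. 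The first observation is the Agmon-type off-diagonal decay $\|P_k(x,z)\|_{k\phi_0}^2 \leq C k^n e^{-c\sqrt{k}\, d(x,z)}$, which confines the integral to a geodesic ball of radius $O(k^{-1/2}\log k)$ around $x$ modulo an $O(k^{-\infty})$ error; this is where the uniformity in $\phi_0$ over a $C^\infty$-compact set first enters, through uniform bounds on the constants $C, c$.

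On such a ball I would pass to K\"ahler normal coordinates centered at $x$ together with a local holomorphic frame of $-K_X$ trivializing the metric to second order, and rescale $z = x + u/\sqrt{k}$. In these coordinates the near-diagonal expansion of \cite{MM12} gives
\begin{equation*}
\|P_k(x,z)\|_{k\phi_0}^2\, MA(\phi_0)(z) = k^n\Big( \mathcal{P}(u) + k^{-1/2} \mathcal{P}_1(u) + k^{-1} \mathcal{P}_2(u) + \cdots \Big)\, dV(u),
\end{equation*}
where $\mathcal{P}(u) = e^{-\pi |u|^2}$ is the model Bargmann--Fock density and each $\mathcal{P}_j(u)$ is the product of $\mathcal{P}(u)$ with a polynomial in $u$ whose coefficients are universal expressions in the curvature $\mathrm{Riem}(\omega_{\phi_0})$ and its derivatives at $x$. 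The coefficient $\mathcal{P}_j$ has the same parity as $j$ in $u$, so all odd-order contributions integrate to zero.

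Next I would Taylor-expand $f(z) = f(x) + k^{-1/2}\, df_x(u) + \tfrac12 k^{-1}\, \mathrm{Hess}(f)_x(u,u) + O(k^{-3/2})$ and integrate term by term. The leading term produces $f(x) \int \mathcal{P}\, dV = f(x)$, giving $b_{f,0} = f$; the half-integer order $k^{n-1/2}$ vanishes because both of its contributions have odd integrands, namely the linear $df_x(u)$ paired with the even $\mathcal{P}$, and $f(x)$ paired with the odd $\mathcal{P}_1$. At order $k^{n-1}$ exactly two terms survive: the Hessian term $\tfrac12 \mathrm{Hess}(f)_x(u,u)$ integrated against the second moment of $\mathcal{P}$, which by the Gaussian moment identities retains only the mixed $\partial\bar\partial$-components and produces $\Delta_{\phi_0} f$; and the curvature term $f(x) \int \mathcal{P}_2\, dV$, which I claim equals $\tfrac12 S_{\phi_0}(x) f(x)$. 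Collecting these gives $b_{f,1} = \tfrac12 S_{\phi_0} f + \Delta_{\phi_0} f$. A useful consistency check is $f \equiv 1$: here $K_{1,k} = \rho_k$ and the expansion collapses to $b_{1,0} = 1$, $b_{1,1} = \tfrac12 S_{\phi_0}$, matching Theorem \ref{abf}.

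The main obstacle is the precise tracking of the subleading curvature coefficient $\mathcal{P}_2$ so as to pin down the factor $\tfrac12$ in front of $S_{\phi_0}$; this requires the full second-order term in the near-diagonal Bergman kernel expansion rather than a soft concentration argument, and is exactly the content borrowed from \cite{MM12}. For the uniformity statements, I would observe that $f \mapsto K_{f,k}$ is linear and that all the remainder estimates above are controlled by finitely many $C^l$-norms of $f$ and of the curvature of $\phi_0$; since these norms are bounded on $C^\infty$-compact families, the constants in the error terms can be chosen uniformly, yielding the asserted uniform expansion.
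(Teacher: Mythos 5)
The first thing to note is that the paper contains no proof of Theorem \ref{afp} at all: it is imported verbatim as a black box from Ma--Marinescu \cite{MM12} (as is Theorem \ref{act}), so there is no internal argument to compare yours against. Your sketch is a reconstruction of the proof in the cited literature, and the overall strategy --- off-diagonal Agmon decay, $\sqrt{k}$-rescaling in normal coordinates, termwise Gaussian integration --- is indeed the standard route (Dai--Liu--Ma and \cite{MM12}). Be aware, though, that your proposal is not an independent proof: both the near-diagonal expansion you invoke and the identification $\int \mathcal{P}_2\, dV = \tfrac12 S_{\phi_0}$ are precisely the content of \cite{MM12}, and you explicitly borrow the latter; at the decisive point the argument therefore has the same logical status as the paper's citation.

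Beyond that, there is a concrete gap at the order $k^{n-1}$ step. You claim that ``exactly two terms survive'' at this order, but parity does not justify discarding the third candidate, namely the cross term
\[
\int_{\mathbb{C}^n} df_x(u)\, \mathcal{P}_1(u)\, dV(u),
\]
coming from the $k^{-1/2}$ Taylor term of $f$ paired with the $k^{-1/2}$ coefficient of the kernel density. Its integrand is odd times odd, hence \emph{even}, so it is not killed by the symmetry argument you use for the order $k^{n-1/2}$ terms. What actually disposes of it is the K\"ahler-specific fact that $\mathcal{P}_1 \equiv 0$: in K\"ahler normal coordinates with a suitably normalized frame the local potential has no cubic terms, so the rescaled expansion proceeds in integer powers of $k^{-1}$ (in the Dai--Liu--Ma formalism, the first rescaled operator $\mathcal{O}_1$ vanishes in the prequantum K\"ahler setting). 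Your setup only requires the frame to trivialize the metric ``to second order,'' which permits cubic terms in the potential and hence a nonzero $\mathcal{P}_1$; with such a choice, the argument as written would produce a spurious first-derivative term $\langle df_x, V\rangle$ in $b_{f,1}$, contradicting the very formula you are proving. This step must be stated and justified explicitly. Two minor slips: the off-diagonal bound should read $\|P_k(x,z)\|_{k\phi_0}^2 \le C k^{2n} e^{-c\sqrt{k}\, d(x,z)}$, since the squared norm is of size $k^{2n}$ on the diagonal; and the uniformity in $\phi_0$ requires uniformity of the near-diagonal expansion itself over the $C^\infty$-compact family, not merely of the constants in the localization step.
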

For $f,g \in C^{\infty}(X,{\mathbb R})$, we also use the kernel of the composition $T_{f,k} \circ T_{g,k}$:
\[
\widetilde{K}_{f,g,k}(x,y)=\int_X \widetilde{K}_{f,k}(x,z) \widetilde{K}_{g,k}(z,y) MA(\phi_0)(z).
\]
Restricting the diagonal, we obtain a function
\begin{eqnarray*}
K_{f,g,k}(x)&:=&\widetilde{K}_{f,g,k}(x,x)\\
 &=&\sum_{\alpha,\beta,\gamma} \int_{X \times X} f(y)g(z)(s_{\alpha},s_{\beta})_{k\phi_0}(y)
(s_{\beta},s_{\gamma})_{k\phi_0}(z)(s_{\gamma},s_{\alpha})_{k\phi_0}(x) \\
&\hbox{}& \hspace{18mm} MA(\phi_0)(y) \wedge MA(\phi_0)(z).
\end{eqnarray*}
\begin{theorem}[\cite{MM12}] \label{act}
We have the following asymptotic expansion:
\[
\rho_k (\phi_0) = b_{f,g,0} k^n + b_{f,g,1} k^{n-1} + \cdots
\]
for smooth functions $b_{f,g,j}$. Moreover, there are the following formula for coefficients:
\begin{eqnarray*}
b_{f,g,0}&=&fg,\\
b_{f,f,1}&=&\frac{1}{2}S_{\phi_0} f^2+2f\Delta_{\phi_0} f+\frac{1}{2}|df|_{\phi_0}^2.
\end{eqnarray*}
The expansion is uniform in $f,g$ varying in a subset of $C^{\infty}(X,{\mathbb R})$ which is compact for the $C^{\infty}$-topology. It is also uniform for $\phi_0$ as long as $\phi_0$ stays a compact set in the $C^{\infty}$-toplogy.
\end{theorem}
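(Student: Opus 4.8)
The plan is to reproduce the Ma--Marinescu argument: localize the double integral defining $K_{f,g,k}$ near a fixed point $x_0\in X$, rescale coordinates by $\sqrt k$, and evaluate the resulting Gaussian integrals against the model (Bargmann--Fock) Bergman kernel on $\mathbb{C}^n$. The indispensable input is the \emph{near-diagonal} asymptotic expansion of the Bergman kernel $\widetilde K_k(x,y):=\sum_i s_i(y)\otimes s_i^\ast(x)$ (whose restriction to the diagonal is $\rho_k(\phi_0)$), refining Theorem \ref{abf}: in Kähler normal coordinates centred at $x_0$ with a normalized holomorphic frame of $-kK_X$, writing $y=x_0+u/\sqrt k$ and $z=x_0+v/\sqrt k$, one has
\[
\widetilde K_k(x_0+u/\sqrt k,\,x_0+v/\sqrt k)=k^n\,\mathcal P(u,v)\,\bigl(1+k^{-1/2}\mathcal Q_1(u,v)+k^{-1}\mathcal Q_2(u,v)+\cdots\bigr),
\]
where $\mathcal P(u,v)=\exp\bigl(-\tfrac12|u|^2-\tfrac12|v|^2+\langle u,v\rangle\bigr)$ is the model kernel (so $\mathcal P(u,u)=1$) with $\langle u,v\rangle=\sum_j u_j\bar v_j$, and each $\mathcal Q_j$ is a polynomial in $(u,\bar u,v,\bar v)$ whose coefficients are universal expressions in $\mathrm{Riem}(\omega_{\phi_0})$ and its covariant derivatives at $x_0$; this expansion is uniform over $C^\infty$-compact families of $\phi_0$. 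The Gaussian factor in $\mathcal P$ supplies the off-diagonal decay that makes the localization rigorous.

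First I would record that the same near-diagonal estimate yields $|\widetilde K_k(x,y)|\le C k^n e^{-c\sqrt k\,d(x,y)}$ off the diagonal, with $C,c$ uniform over $C^\infty$-compact families. Writing $K_{f,g,k}(x_0)=\int_{X\times X} f(y)g(z)\,\widetilde K_k(x_0,y)\widetilde K_k(y,z)\widetilde K_k(z,x_0)\,MA(\phi_0)(y)\,MA(\phi_0)(z)$ (the scalar ``triangle'' obtained by contracting the three kernels in the $Hilb_k(\phi_0)$-ONB, as in the displayed formula for $K_{f,g,k}$), the three Gaussian factors force $y$ and $z$ to concentrate at scale $k^{-1/2}$ about $x_0$. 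Consequently the contribution of the region $d(y,x_0)+d(z,x_0)\ge\varepsilon$ is $O(k^{-\infty})$, uniformly in $f,g$ ranging over a $C^\infty$-bounded set, so I may restrict to a fixed small ball about $x_0$ and use the near-diagonal expansion for each kernel factor there.

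Next I would rescale. With $y=x_0+u/\sqrt k$, $z=x_0+v/\sqrt k$, each volume form contributes $k^{-n}\bigl(1+O(k^{-1})\bigr)$ times normalized Lebesgue measure $d\mu$, while the three kernel factors contribute $k^{3n}\,\mathcal P(0,u)\mathcal P(u,v)\mathcal P(v,0)$ times $\bigl(1+O(k^{-1/2})\bigr)$; the net power $k^{3n-2n}=k^n$ matches the claimed leading order. Taylor expanding $f(y)=f(x_0)+k^{-1/2}df(x_0)[u]+\tfrac12 k^{-1}\nabla^2 f(x_0)[u,u]+\cdots$ and likewise for $g$, the coefficient of $k^n$ is $f(x_0)g(x_0)\int\!\!\int \mathcal P(0,u)\mathcal P(u,v)\mathcal P(v,0)\,d\mu(u)\,d\mu(v)$. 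The reproducing identity $\int \mathcal P(\zeta,w)\mathcal P(w,\eta)\,d\mu(w)=\mathcal P(\zeta,\eta)$ applied twice collapses the triangle to the diagonal value $\mathcal P(0,0)=1$, so $b_{f,g,0}=fg$; all odd-order Gaussian moments in $(u,v)$ vanish, so the putative $k^{n-1/2}$ term drops out and the expansion proceeds in integer powers of $k$.

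The main obstacle is the identification of $b_{f,f,1}$, which collects three kinds of order-$k^{n-1}$ contribution: the subleading kernel polynomials $\mathcal Q_1,\mathcal Q_2$ (where $S_{\phi_0}$ enters, exactly as in the $k^{n-1}$ term of Theorem \ref{abf}), the $O(k^{-1})$ Jacobians of the two volume forms, and the second-order Taylor terms of $f$ paired by the Gaussian. Rather than expand each $\mathcal Q_j$ explicitly, I would organize this through the composition law for Toeplitz operators, $T_{f,k}\circ T_{g,k}=\sum_{r\ge0}k^{-r}T_{C_r(f,g),k}+O(k^{-\infty})$ with bidifferential $C_r$, which gives $K_{f,g,k}=\sum_{r\ge0}k^{-r}K_{C_r(f,g),k}$ so that each summand is governed by Theorem \ref{afp}. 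Since $C_0(f,g)=fg$ and, in the normalization of Theorem \ref{afp}, $C_1(f,f)=-\tfrac12|df|_{\phi_0}^2$, I combine the $k^{n-1}$ coefficient of $K_{f^2,k}$, which by Theorem \ref{afp} equals $\tfrac12 S_{\phi_0}f^2+\Delta_{\phi_0}(f^2)=\tfrac12 S_{\phi_0}f^2+2f\Delta_{\phi_0}f+|df|_{\phi_0}^2$ via the product rule $\Delta_{\phi_0}(f^2)=2f\Delta_{\phi_0}f+|df|_{\phi_0}^2$, with the leading coefficient $C_1(f,f)$ of $k^{-1}K_{C_1(f,f),k}$, obtaining exactly $b_{f,f,1}=\tfrac12 S_{\phi_0}f^2+2f\Delta_{\phi_0}f+\tfrac12|df|_{\phi_0}^2$. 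Finally, the uniformity assertions are inherited from the inputs: the off-diagonal decay, the near-diagonal expansion, and the finitely many Gaussian moments at each order are all uniform over $C^\infty$-compact families of $(f,g)$ and of $\phi_0$, as recorded in Theorems \ref{abf} and \ref{afp}.
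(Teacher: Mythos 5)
The first thing to say is that the paper contains no proof of this statement: Theorem \ref{act} is imported verbatim from \cite{MM12} (note also that the displayed left-hand side should read $K_{f,g,k}$ rather than $\rho_k(\phi_0)$ --- a typo in the paper which you silently and correctly repair). So the only meaningful comparison is between your reconstruction and the Ma--Marinescu argument itself, and on that score your proposal is faithful and sound in outline: the Agmon-type off-diagonal decay justifying localization, the $\sqrt{k}$-rescaling in K\"ahler normal coordinates, the model kernel $\mathcal{P}$ with its reproducing identity collapsing the triangle to give $b_{f,g,0}=fg$, and the parity argument eliminating half-integer powers are exactly the standard machinery, and your power counting ($k^{3n}$ from the three kernel factors against $k^{-2n}$ from the two volume forms) is correct.

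The one place where your argument is not self-contained is precisely the identification of $b_{f,f,1}$, which is the actual content of the theorem. You invoke the composition law $T_{f,k}\circ T_{g,k}=\sum_{r\ge 0}k^{-r}T_{C_r(f,g),k}+O(k^{-\infty})$ together with the value $C_1(f,f)=-\tfrac12|df|^2_{\phi_0}$; but this composition law, and in particular the formula for $C_1$, is itself one of the main theorems of \cite{MM12}, of essentially the same depth as the statement being proved --- Theorem \ref{act} is more or less its diagonal-kernel form. As written, then, your proof is a correct \emph{derivation} of Theorem \ref{act} from Theorem \ref{afp} plus the Ma--Marinescu composition theorem, not an independent proof. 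The arithmetic does check out: $b_{f^2,1}=\tfrac12 S_{\phi_0}f^2+\Delta_{\phi_0}(f^2)=\tfrac12 S_{\phi_0}f^2+2f\Delta_{\phi_0}f+|df|^2_{\phi_0}$, and adding $C_1(f,f)=-\tfrac12|df|^2_{\phi_0}$ yields the stated $b_{f,f,1}$, consistently with $|df|^2=2|\partial f|^2$ and $\Delta_{\phi_0}=g^{i\bar{j}}\partial_i\partial_{\bar{j}}$. To make the argument genuinely self-contained you would have to carry out the order-$k^{n-1}$ Gaussian-moment computation that your setup prepares but then sidesteps: the contributions of $\mathcal{Q}_1,\mathcal{Q}_2$, the volume Jacobians, the second-order Taylor terms of $f$, and --- a cross term your enumeration omits --- the products of two half-order contributions, e.g.\ $\mathcal{Q}_1$ paired against the first-order Taylor term $k^{-1/2}df(x_0)[u]$, which is exactly where the $-\tfrac12|df|^2$-type correction is generated. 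That computation is routine given your localization framework, but it is where the work of \cite{MM12} actually lies.
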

\section{Proof of the main theorem}
\subsection{The second variation formula for ${\mathcal D}^{(k)}$}
Before going to the proof, we define some notations that we will use later. For a hermitian matrix $A=(A_{i,j}) \in \sqrt{-1}{\mathfrak u}(N_k)$, we write $\xi_A$ for the corresponding holomorphic vector field on ${\mathbb CP}^{N_k-1}$, i.e., the push forward of $\sum_{i,j}A_{i,j}Z_i \frac{\partial}{\partial Z_j}$ via the standard projection ${\mathbb C}^{N_k} \backslash \{0\} \to {\mathbb CP}^{N_k-1}$. We set
\[
H(A):={\rm tr}(A M),
\]
then $H(A)$ is a real-valued smooth function satisfying
\[
i_{\xi_A} \omega_{FS}=\frac{\sqrt{-1}}{2 \pi} \bar{\partial}H(A),
\]
where $\omega_{FS} \in c_1({\mathcal O}(1))$ denotes the Fubini-Study metric. Moreover, if we decompose $\xi_A=\xi_A^{\mathbb R}-\sqrt{-1}J\xi_A^{\mathbb R}$, we find that $H(A)$ is the Hamiltonian for $J \xi_A^{\mathbb R}$:
\[
i_{J\xi_A^{\mathbb R}} \omega_{FS}=-\frac{1}{4 \pi}dH(A).
\]
For $A \in \sqrt{-1}{\mathfrak u}(N_k)$, let $H_{g(t)}$ be the corresponding {\it Bergman geodesic}, i.e., the family of hermitian forms corresponding to the one-parameter flow $g(t):=e^{\frac{1}{2}tA}$.
\begin{lemma} \label{fma}
The function ${\mathcal E}^{(k)}$ is affine along Bergman geodesics, i.e., we have
\[
\frac{d}{dt} {\mathcal E}^{(k)}(H_{g(t)})=\frac{1}{kN_k}{\rm tr}(A).
\]
\end{lemma}
\begin{proof}
Set $\underline{s}^{(t)}:=(s_1^{(t)}, \ldots, s_{N_k}^{(t)})=\underline{s}_0 \cdot g(t)$.
Since $(H_{g(t)}(s_i,s_j))={}^t\!g(t)^{-2}$, the direct computation shows that
\begin{eqnarray*}
\frac{d}{dt} {\mathcal E}^{(k)}(H_{g(t)})&=&-\frac{1}{kN_k} {\rm tr} \left( (H_{g(t)}(s_i,s_j))^{-1} \frac{d}{dt} (H_{g(t)}(s_i,s_j)) \right)\\
&=& -\frac{1}{kN_k}{\rm tr} \left( g(t)^2 \cdot \left( \frac{dg(t)^{-1}}{dt} \cdot g(t)^{-1} + g(t)^{-1} \cdot \frac{dg(t)^{-1}}{dt} \right) \right)\\
&=& \frac{1}{kN_k}{\rm tr} \left(g(t) \cdot \frac{1}{2}A \cdot g(t)^{-1}+\frac{1}{2}A \right)\\
&=& \frac{1}{kN_k}{\rm tr}(A).
\end{eqnarray*}
\end{proof}
\begin{lemma} \label{dfl}
\begin{enumerate}
\item we have
\[
\frac{d}{dt} {\mathcal L}(FS_k(H_{g(t)}))=k^{-1}\underline{M}(g(t)).
\]
\item
The second variation formula for ${\mathcal L} \circ FS_k$ is
\begin{eqnarray*}
\frac{d^2}{dt^2} {\mathcal L}(FS_k(H_{g(t)}))_{|t=0}&=&k^{-1} \int_X |\xi_A|_{FS}^2 \mu_{FS_k(H_0)}-k^{-2} \int_XH(A)^2 \mu_{FS_k(H_0)} \\
&+&k^{-2} \left( \int_X H(A) \mu_{FS_k(H_0)} \right)^2.
\end{eqnarray*}
\end{enumerate}
\end{lemma}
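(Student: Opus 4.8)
The plan is to reduce the whole statement to computing the first two $t$-derivatives of the potential $\phi_t:=FS_k(H_{g(t)})$ and inserting them into the elementary variation formulas for ${\mathcal L}(\phi)=-\log\int_X e^{-\phi}$. First I would make $FS_k(H_{g(t)})$ explicit through the Kodaira embedding. Writing $\iota_{\underline{s}_0}(x)=[Z(x)]$ in the reference frame, the change of basis $\underline{s}_0\cdot g(t)$ with $g(t)=e^{\frac12 tA}$ acts on the homogeneous coordinates, and a short computation gives $k\phi_t=\log\langle e^{t\bar A}Z,Z\rangle$ up to an additive constant coming from the factor $k^{-n}$ (irrelevant for $t$-derivatives). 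Since the Fubini--Study weight is $\log\|Z\|^2$, this is just the pullback of the Fubini--Study potential along the projective automorphism induced by $g(t)$. Differentiating the explicit rational expression yields $k\dot\phi_t=\langle\bar A e^{t\bar A}Z,Z\rangle/\langle e^{t\bar A}Z,Z\rangle$, which at $t=0$ equals $\langle\bar A Z,Z\rangle/\|Z\|^2={\rm tr}(AM)=H(A)$; more generally $k\dot\phi_t$ is the value of $H(A)$ at the $g(t)$-image of $x$.

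For part (1) I would combine the immediate identity $\frac{d}{dt}{\mathcal L}(\phi_t)=\int_X\dot\phi_t\,\mu_{\phi_t}$ (which follows from $\mu_\phi=e^{-\phi}/\int_X e^{-\phi}$) with the formula above. Pulling the trace out of the defining integral $\underline M(g(t))=\int_X (M\circ g(t))\,\mu_{FS_k(H_{g(t)})}$ gives $\frac{d}{dt}{\mathcal L}(FS_k(H_{g(t)}))=k^{-1}\int_X {\rm tr}\big(A\,(M\circ g(t))\big)\,\mu_{\phi_t}=k^{-1}{\rm tr}\big(A\,\underline M(g(t))\big)$, which is exactly the assertion that the gradient of ${\mathcal L}\circ FS_k$ at $H_{g(t)}$ equals $k^{-1}\underline M(g(t))$ with respect to the Killing pairing (matching the convention already used in Lemma \ref{fma}).

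For part (2) I would first record the second variation of ${\mathcal L}$ along an arbitrary curve. Differentiating $\frac{d}{dt}\mu_{\phi_t}=\big(-\dot\phi_t+\int_X\dot\phi_t\mu_{\phi_t}\big)\mu_{\phi_t}$ gives $\frac{d^2}{dt^2}{\mathcal L}(\phi_t)=\int_X\ddot\phi_t\,\mu_{\phi_t}-\int_X\dot\phi_t^2\,\mu_{\phi_t}+\big(\int_X\dot\phi_t\,\mu_{\phi_t}\big)^2$. At $t=0$ the last two terms become $-k^{-2}\int_X H(A)^2\mu+k^{-2}(\int_X H(A)\mu)^2$ because $\dot\phi_0=k^{-1}H(A)$. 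For the first term I need $\ddot\phi_0$: differentiating $k\dot\phi_t$ once more and using that $\bar A$ is again Hermitian, I get $k\ddot\phi_0=\langle\bar A^2 Z,Z\rangle/\|Z\|^2-(\langle\bar A Z,Z\rangle/\|Z\|^2)^2=\|\bar A Z\|^2/\|Z\|^2-H(A)^2$. The point is that this ``variance'' is precisely the Fubini--Study squared norm $|\xi_A|^2_{FS}$: the holomorphic field $\xi_A$ is represented at $[Z]$ by the vector $\bar A Z\in{\mathbb C}^{N_k}$ (directly from its definition as the pushforward of $\sum_{i,j}A_{i,j}Z_i\partial/\partial Z_j$), and $|\xi_A|^2_{FS}=(\|\bar A Z\|^2\|Z\|^2-|\langle\bar A Z,Z\rangle|^2)/\|Z\|^4$. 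Hence $\int_X\ddot\phi_0\,\mu=k^{-1}\int_X|\xi_A|^2_{FS}\mu$, and assembling the three contributions gives the claimed formula.

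The main obstacle is this last identification $k\ddot\phi_0=|\xi_A|^2_{FS}$: it is here that the bookkeeping of conventions must be carried out consistently --- namely how $g(t)$ acts on homogeneous coordinates versus on the frame (the source of the complex conjugate $\bar A$, which is exactly the representative of $\xi_A$) and the $2\pi$-normalization hidden in $\omega_{FS}\in c_1({\mathcal O}(1))$ and in $H(A)={\rm tr}(AM)$ --- so that the variance matches the Fubini--Study norm with exactly the constant appearing in the statement. Everything else is a routine differentiation of explicit rational functions of $Z$ together with the elementary first and second variation of ${\mathcal L}$. I would finally remark that the same variance expression is manifestly nonnegative by Cauchy--Schwarz, which is the germ of the convexity recorded in Corollary \ref{coq}.
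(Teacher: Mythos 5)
Your proposal is correct, and its skeleton coincides with the paper's: both reduce the lemma to the first and second variation formulas for ${\mathcal L}$ along the curve $\phi_t=FS_k(H_{g(t)})$, with the measure-variation terms handled identically. The one genuine difference is how the leading term is identified with $k^{-1}\int_X|\xi_A|_{FS}^2\,\mu_{FS_k(H_0)}$. The paper stays coordinate-free: it writes $\dot\phi_t=k^{-1}{\rm tr}((M\circ g(t))\cdot A)$ and evaluates $\frac{d}{dt}{\rm tr}((M\circ g(t))\cdot A)_{|t=0}={\rm tr}(dM(\xi_A^{\mathbb R})\cdot A)=-4\pi\,\omega_{FS}(J\xi_A^{\mathbb R},\xi_A^{\mathbb R})=|\xi_A|_{FS}^2$ using the Hamiltonian identity $i_{J\xi_A^{\mathbb R}}\omega_{FS}=-\frac{1}{4\pi}dH(A)$, i.e.\ it exploits the moment-map structure that underlies the whole setup. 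You instead compute everything in homogeneous coordinates, obtaining $k\ddot\phi_0=\|\bar AZ\|^2/\|Z\|^2-H(A)^2$, and identify this variance with $|\xi_A|^2_{FS}$; this is precisely the case $B=A$ of Fine's identity $H(A)H(B)+(\xi_A,\xi_B)_{FS}={\rm tr}(ABM)$, which the paper only invokes later (Lemma \ref{flm}, quoted from \cite{Fin10}) in the asymptotic analysis. So your route effectively re-proves that lemma rather than using the symplectic identity; what it buys is an elementary, self-contained argument in which nonnegativity of the critical term is manifest by Cauchy--Schwarz (the germ of Corollary \ref{coq}, as you note), at the cost of the convention bookkeeping you correctly flag (the conjugate $\bar A$ as the representative of $\xi_A$, and the FS normalization), which your formulas do get right under the paper's conventions. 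A minor point in your favor: in part (1) the paper's literal statement equates a scalar with a matrix, and your reading of it as the gradient/Killing-pairing identity $\frac{d}{dt}{\mathcal L}(FS_k(H_{g(t)}))=k^{-1}{\rm tr}\bigl(A\,\underline{M}(g(t))\bigr)$ is the correct interpretation.
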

\begin{proof}
(1) Direct computation shows that
\begin{eqnarray*}
\frac{d}{dt} {\mathcal L}(FS_k(H_{g(t)}))&=&-\left( \int_X e^{-FS_k(H_{g(t)})} \right)^{-1} \cdot \int_X \left(-\frac{d}{dt}FS_k(H_{g(t)}) \right) \cdot e^{-FS_k(H_{g(t)})}\\
&=& \int_X \frac{d}{dt}FS_k(H_{g(t)}) \mu_{FS_k(H_{g(t)})},
\end{eqnarray*}
and
\begin{eqnarray} \label{dfs}
\frac{d}{dt}FS_k(H_{g(t)})&=&k^{-1} \cdot \frac{1}{\sum_i |s_i^{(t)}|^2} \cdot \frac{d}{dt} \sum_i |s_i^{(t)}|^2 \nonumber \\
&=& k^{-1} {\rm tr}((M \circ g(t)) \cdot A).
\end{eqnarray}
Thus we have
\begin{eqnarray*}
\frac{d}{dt} {\mathcal L}(FS_k(H_{g(t)})) &=& k^{-1} \int_X {\rm tr}((M \circ g(t)) \cdot A) \mu_{FS_k(H_{g(t)})}\\
&=& k^{-1}\underline{M}(g(t)).
\end{eqnarray*}
(2) We compute
\begin{eqnarray*}
\frac{d^2}{dt^2} {\mathcal L}(FS_k(H_{g(t)}))_{|t=0}&=& k^{-1} \int_X \frac{d}{dt} {\rm tr}((M \circ g(t)) \cdot A)_{|t=0} \mu_{FS_k(H_0)}\\
&+& k^{-1} \int_X H(A) \cdot \frac{d}{dt} \mu_{FS_k(H_0)}\hbox{}_{|t=0}.
\end{eqnarray*}
Since
\begin{eqnarray*}
\frac{d}{dt} {\rm tr}((M \circ g(t)) \cdot A)_{|t=0} &=& {\rm tr}(dM(\xi_A^{\mathbb R}) \cdot A)\\
&=&-4 \pi \omega_{FS}(J\xi_A^{\mathbb R}, \xi_A^{\mathbb R})\\
&=&|\xi_A|_{FS}^2,
\end{eqnarray*}
the first term is
\begin{equation} \label{ftm}
k^{-1} \int_X |\xi_A|_{FS}^2 \mu_{FS_k(H_0)}.
\end{equation}
On the other hand, using $H(A)={\rm tr}(AM)$ and \eqref{dfs}, we have
\begin{eqnarray*}
\frac{d}{dt} \mu_{FS_k(H_0)}\hbox{}_{|t=0} &=& \left( \int_X \frac{d}{dt} FS_k(H_{g(t)})_{|t=0} \mu_{FS_k(H_0)} \right) \cdot \mu_{FS_k(H_0)} \\
&-& \frac{d}{dt} FS_k(H_{g(t)})_{|t=0} \cdot \mu_{FS_k(H_0)}\\
&=& k^{-1} \left( \int_X H(A) \mu_{FS_k(H_0)}-H(A) \right) \mu_{FS_k(H_0)}.
\end{eqnarray*}
Hence the second term is
\begin{equation} \label{stm}
-k^{-2} \int_XH(A)^2 \mu_{FS_k(H_0)}
+k^{-2} \left( \int_X H(A) \mu_{FS_k(H_0)} \right)^2.
\end{equation}
Combining \eqref{ftm} and \eqref {stm} gives our conclusion.
\end{proof}
When we take into account that the Hessian $\nabla^2 {\mathcal D}^{(k)}$ is a symmetric bilinear form, we can easily get Theorem \ref{fhd} from Lemma \ref{fma} and Lemma \ref{dfl}.
\begin{proof}[Proof of Corollary \ref{coq}]
For $A \in \sqrt{-1}{\mathfrak u}(N_k)$, let $\xi_A^{\top}$ be the component of $\xi_A |_{X}$ which is tangent to $X$ and $\xi_A^{\bot}$ the component which is perpendicular to $X$ with respect to the Fubini-Study metric. Then we have
\[
i_{\xi_A^{\top}} \omega_{FS_k(H_0)} = \frac{\sqrt{-1}}{2 \pi} \bar{\partial} \left( k^{-1} H(A) \right)
\]
on $X$. It follows that $k^{-1}|\xi_A^{\top}|_{FS}^2=|\xi_A^{\top}|_{FS_k(H_0)}^2=k^{-2}|\partial H(A)|_{FS_k(H_0)}^2$. Combining with the formula $|\xi_A|_{FS}^2=|\xi_A^{\top}|_{FS}^2+|\xi_A^{\bot}|_{FS}^2$, we have
\begin{eqnarray*}
\nabla^2 {\mathcal D}^{(k)}\hbox{}_{|H_0}(A,A)&=& k^{-2} \nabla^2 {\mathcal D}\hbox{}_{|FS_k(H_0)}(H(A),H(A))+k^{-1} \int_X |\xi_A^{\bot}|_{FS}^2 \mu_{FS_k(H_0)}\\
&\geq& k^{-1} \int_X |\xi_A^{\bot}|_{FS}^2 \mu_{FS_k(H_0)} \;\;\; (\text{by Remark \ref{frk}})\\
&\geq& 0.
\end{eqnarray*}
Now we assume that $\nabla^2 {\mathcal D}^{(k)}\hbox{}_{|H_0}(A,A)=0$, then we have $\xi_A^{\bot}=0$, and hence $A \in {\rm Lie}({\rm Aut}(X,-kK_X))$ as desired. Conversely, the conditiion $A \in {\rm Lie}({\rm Aut}(X,-kK_X))$ implies that $\xi_A^{\bot}=0$ and $\xi_A^{\top}$ is holomorphic. Differentiating the equation ${\rm Ric}(\omega_{FS_k(H_0)})-\omega_{FS_k(H_0)}=\frac{\sqrt{-1}}{2 \pi} \partial \bar{\partial} h_{FS_k(H_0)}$ with respect to $\xi_A^{\top}$, we obtain
\[
-\Delta_{FS_k(H_0)} H(A)-(\partial h_{FS_k(H_0)}, \partial H(A))_{FS_k(H_0)}-H(A)+\int_X H(A) \mu_{{FS_k(H_0)}}=0.
\]
Multiplying $H(A)$ and integrating by parts, we obtain $\nabla^2 {\mathcal D}\hbox{}_{|FS_k(H_0)}(H(A),H(A))=0$. Therefore, we have $\nabla^2{\mathcal D}^{(k)}\hbox{}_{|H_0}(A,A)=0$.
\end{proof}
\subsection{Asymptotic of the Hessian $\nabla^2{\mathcal D}^{(k)}$}
Our starting point is the following:
\begin{lemma}[\cite{Fin10}, Lemma 18] \label{flm}
For any Hermitian matrices $A,B \in \sqrt{-1}{\mathfrak u}(N_k)$, we have
\[
H(A)H(B)+(\xi_A,\xi_B)_{FS}={\rm tr}(ABM).
\]
\end{lemma}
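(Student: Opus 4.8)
The plan is to reduce everything to an explicit computation on $\mathbb{CP}^{N_k-1}$ at a single point, using the homogeneity of the Fubini--Study geometry. Fix a point $p=[Z_1;\cdots;Z_{N_k}]\in\mathbb{CP}^{N_k-1}$ and, after acting by an element of $U(N_k)$ (which preserves both sides: $M$ transforms equivariantly, the Killing form is $\mathrm{Ad}$-invariant, and $\omega_{FS}$ is $U(N_k)$-invariant), we may assume $p=[1;0;\cdots;0]$. Then $M(p)=E_{11}$, the matrix with a single $1$ in the top-left corner, so $H(A)(p)=\mathrm{tr}(AM(p))=A_{11}$ and likewise $H(B)(p)=B_{11}$, while $\mathrm{tr}(ABM(p))=(AB)_{11}=\sum_j A_{1j}B_{j1}=A_{11}B_{11}+\sum_{j\ge 2}A_{1j}\overline{A_{j1}}\cdots$; more precisely, since $B\in\sqrt{-1}\,\mathfrak{u}(N_k)$ is Hermitian, $B_{j1}=\overline{B_{1j}}$, so $\mathrm{tr}(ABM(p)) = A_{11}B_{11} + \sum_{j\ge 2} A_{1j}\,\overline{B_{1j}}$.

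The remaining task is to identify $(\xi_A,\xi_B)_{FS}(p)$ with $\sum_{j\ge 2}A_{1j}\overline{B_{1j}}$. First I would write down $\xi_A$ in the affine chart $w_j=Z_{j+1}/Z_1$ ($j=1,\dots,N_k-1$) centered at $p$: starting from the linear vector field $\sum_{i,j}A_{ij}Z_i\,\partial/\partial Z_j$ on $\mathbb{C}^{N_k}\setminus\{0\}$ and pushing forward, one gets at $w=0$ simply $\xi_A|_p=\sum_{j\ge 2}A_{1j}\,\partial/\partial w_{j-1}$ (all terms involving the $w$'s vanish to the order needed at the origin). Since in this chart $\omega_{FS}=\frac{\sqrt{-1}}{2\pi}\partial\bar\partial\log(1+|w|^2)$ has Hermitian matrix equal to $\frac{1}{2\pi}\,\mathrm{Id}$ at $w=0$, the induced Hermitian inner product on $(1,0)$-tangent vectors — the one I read off as $(\cdot,\cdot)_{FS}$, normalized so that $|\xi_A|_{FS}^2$ matches the usage in Lemma \ref{dfl} and Corollary \ref{coq} — pairs $\xi_A|_p$ and $\xi_B|_p$ to give $\sum_{j\ge 2}A_{1j}\overline{B_{1j}}$ (with the normalization constant absorbed; I would double-check the $2\pi$ against the sign/normalization conventions fixed just above in the definition of $H(A)$ via $i_{\xi_A}\omega_{FS}=\frac{\sqrt{-1}}{2\pi}\bar\partial H(A)$, which pins it down). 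Adding this to $H(A)(p)H(B)(p)=A_{11}B_{11}$ reproduces $\mathrm{tr}(ABM(p))$ exactly, and since $p$ was arbitrary the identity of functions follows.

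The one genuinely delicate point is bookkeeping of normalizations: the factor of $2\pi$ in $\omega_{FS}$, the convention that $|\xi_A|_{FS}^2$ denotes the (real or Hermitian?) squared norm, and the relation $|\xi_A|^2_{FS}=-4\pi\,\omega_{FS}(J\xi_A^{\mathbb{R}},\xi_A^{\mathbb{R}})$ already used in the proof of Lemma \ref{dfl}. Rather than fight these, I would instead deduce the lemma by pairing known identities: we already have $i_{\xi_A}\omega_{FS}=\frac{\sqrt{-1}}{2\pi}\bar\partial H(A)$ and the computation $\frac{d}{dt}\mathrm{tr}((M\circ g(t))A)|_{t=0}=\mathrm{tr}(dM(\xi_A^{\mathbb{R}})\cdot A)=|\xi_A|_{FS}^2$ from Lemma \ref{dfl}; polarizing the latter in $A$ (replace $A$ by $A$ and test against $B$, i.e.\ compute $\mathrm{tr}(dM(\xi_A^{\mathbb{R}})\cdot B)$) gives $\mathrm{Re}(\xi_A,\xi_B)_{FS}$, and combining with the Leibniz identity $d\,\mathrm{tr}(AMB)=\mathrm{tr}(A\,dM\,B)$ evaluated on $\xi_A^{\mathbb{R}}$ — together with $\mathrm{tr}(ABM)+\mathrm{tr}(BAM)$ symmetrized — isolates $H(A)H(B)+\mathrm{Re}(\xi_A,\xi_B)_{FS}$. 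The imaginary part drops because the left side $\mathrm{tr}(ABM)$ is real ($A,B,M$ Hermitian and $M$ rank one gives $\mathrm{tr}(ABM)=\overline{\mathrm{tr}(ABM)}$ at each point), which also tells us the "$\mathrm{Re}$" in Theorem \ref{fhd} is harmless here. Either route works; the homogeneity-plus-explicit-chart argument is cleaner to write, the identity-pairing argument is safer on constants, so I would present the former and remark that the normalization is forced by consistency with Lemma \ref{dfl}.
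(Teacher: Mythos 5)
Your main argument is correct, and it is worth noting that the paper itself offers no proof of this lemma at all --- it is quoted from Fine \cite{Fin10}, Lemma 18 --- so your write-up would be a genuine addition rather than an alternative to something in the text. The route you choose (use $U(N_k)$-equivariance of $M$ and invariance of $\omega_{FS}$ to reduce to $p=[1;0;\cdots;0]$, where $M(p)=E_{11}$, $H(A)(p)=A_{11}$, $\xi_A|_p=\sum_{j\ge2}A_{1j}\,\partial/\partial w_{j-1}$, and $\mathrm{tr}(ABM)(p)=(AB)_{11}=A_{11}B_{11}+\sum_{j\ge2}A_{1j}\overline{B_{1j}}$) is essentially Fine's own, and your normalization check against $|\xi_A|_{FS}^2=-4\pi\,\omega_{FS}(J\xi_A^{\mathbb R},\xi_A^{\mathbb R})$ from Lemma \ref{dfl} does pin down the inner product correctly: with that convention one gets $|\xi_A|_{FS}^2(p)=\sum_{j\ge2}|A_{1j}|^2$, and the identity closes.

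One concrete error in your remarks: $\mathrm{tr}(ABM)$ is \emph{not} real in general. For Hermitian $A,B,M$ one has $\overline{\mathrm{tr}(ABM)}=\mathrm{tr}(BAM)$, not $\mathrm{tr}(ABM)$; at your normalized point $\mathrm{tr}(ABM)(p)=A_{11}B_{11}+\sum_{j\ge2}A_{1j}\overline{B_{1j}}$, whose imaginary part is generically nonzero (e.g.\ $A=E_{12}+E_{21}$, $B=\sqrt{-1}(E_{12}-E_{21})$ gives $\mathrm{tr}(ABE_{11})=-\sqrt{-1}$). The lemma is an identity of \emph{complex-valued} functions, with $(\xi_A,\xi_B)_{FS}$ the full Hermitian pairing; the $\mathrm{Re}$ in Theorem \ref{fhd} is doing real work and is not ``harmless.'' This matters for your proposed fallback route: polarizing the real computation $\mathrm{tr}(dM(\xi_A^{\mathbb R})\cdot B)$ from Lemma \ref{dfl} can only ever recover $H(A)H(B)+\mathrm{Re}(\xi_A,\xi_B)_{FS}=\mathrm{Re}\,\mathrm{tr}(ABM)$, which is the symmetrized (in $A,B$) version of the lemma --- sufficient for every application in this paper, but strictly weaker than the statement. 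So present the chart computation, as you intended, and drop the claim that the imaginary part vanishes.
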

By Lemma \ref{flm}, we have
\begin{eqnarray*}
\nabla^2{\mathcal D}^{(k)}_{|H_0}(A,B)&=& k^{-1} \int_X {\rm Re} ({\rm tr}(ABM)) \mu_{FS_k(H_0)}-k^{-1}(1+k^{-1}) \int_X H(A)H(B) \mu_{FS_k(H_0)}\\
&+& k^{-2} \int_X H(A) \mu_{FS_k(H_0)} \cdot \int_X H(B) \mu_{FS_k(H_0)}.
\end{eqnarray*}
For given functions $f,g \in C^{\infty}(X, {\mathbb R})$, we set $A:=Q_{f,k}$, $B:=Q_{g,k}$ and compute the asymptotic of $\nabla^2{\mathcal D}^{(k)}_{|H_0}(Q_{f,k},Q_{g,k})$ as $k \to \infty$. However, in the course of the proof, we find that $\nabla^2{\mathcal D}^{(k)}_{|H_0}(Q_{f,k},Q_{g,k})$ has an asymptotic expansion whose coefficients are also symmetric and bilinear with respect to $f$ and $g$. Hence it follows that we may assume $f=g$ to prove Theorem \ref{coh}. We set
\begin{eqnarray*}
A_1&:=& k^{-1} \int_X {\rm tr}(Q_{f,k}^2 M) \mu_{{\mathcal T}_k(\phi_0)},\\
A_2&:=& -k^{-1}(1+k^{-1}) \int_X H(Q_{f,k})^2 \mu_{{\mathcal T}_k(\phi_0)},\\
A_3&:=& k^{-2} \left( \int_X H(Q_{f,k}) \mu_{{\mathcal T}_k(\phi_0)} \right)^2,
\end{eqnarray*}
and we will compute these terms separately, where ${\mathcal T}_k:=FS_k \circ Hilb_k$. The following arguments are based on \cite[Section 2]{Fin12}.
\begin{lemma} \label{acm}
The volume form $e^{-{\mathcal T}_k(\phi_0)}$ has the asymptotic expansion
\[
e^{-{\mathcal T}_k(\phi_0)}=(1+O(k^{-2}))e^{-\phi_0}.
\]
\end{lemma}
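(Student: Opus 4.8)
The plan is to unwind $\mathcal{T}_k = FS_k \circ Hilb_k$ and reduce the statement to the Tian--Yau--Zelditch--Catlin expansion of Theorem~\ref{abf}. First I would record the elementary identity
\[
\mathcal{T}_k(\phi_0) = FS_k(Hilb_k(\phi_0)) = \phi_0 + \frac{1}{k}\log\bigl(k^{-n}\rho_k(\phi_0)\bigr).
\]
To see this, fix a local holomorphic frame of $-K_X$ and write a section $s = \sum_i c_i s_i$ in the reference $Hilb_k(\phi_0)$-ONB $(s_i)$; the pointwise Cauchy--Schwarz inequality gives $\sup_{s \ne 0} |s|^2 / Hilb_k(\phi_0)(s,s) = \sum_i |s_i|^2$, and by definition $\sum_i |s_i|^2 = \rho_k(\phi_0)\,e^{k\phi_0}$ in that frame, so the claimed formula follows on substituting into the definition of $FS_k$.

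Next I would invoke Theorem~\ref{abf}. Since $b_0 = 1$, we have $k^{-n}\rho_k(\phi_0) = 1 + O(k^{-1})$, with the error controlled in every $C^l$-norm (and uniformly for $\phi_0$ in a $C^\infty$-compact set) by the quantitative statement there. Taking logarithms and dividing by $k$ yields
\[
\mathcal{T}_k(\phi_0) - \phi_0 = \frac{1}{k}\log\bigl(1 + O(k^{-1})\bigr) = O(k^{-2}).
\]
Using $b_1 = \tfrac{1}{2}S_{\phi_0}$ one even gets $\mathcal{T}_k(\phi_0) - \phi_0 = \tfrac{1}{2k^2}S_{\phi_0} + O(k^{-3})$, but only the order $O(k^{-2})$ is needed for this lemma.

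Finally I would apply the transformation rule for the canonical volume form under a change of weight: if $\phi' = \phi + u$ with $u \in C^\infty(X,\mathbb{R})$ then $e^{-\phi'} = e^{-u}e^{-\phi}$, immediately from the coordinate expression defining $e^{-\phi}$. Taking $u := \mathcal{T}_k(\phi_0) - \phi_0 = O(k^{-2})$ gives
\[
e^{-\mathcal{T}_k(\phi_0)} = e^{-u}\,e^{-\phi_0} = \bigl(1 + O(k^{-2})\bigr)e^{-\phi_0},
\]
as asserted.

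I expect no essential obstacle here: the only thing to watch is the bookkeeping of uniformity of the remainder, which is handed to us directly by the uniform version of Theorem~\ref{abf}. The one genuinely substantive input is the Bergman kernel expansion, which is quoted rather than reproved.
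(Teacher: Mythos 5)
Your proposal is correct and follows essentially the same route as the paper: both rest on the identity $\mathcal{T}_k(\phi_0)=\phi_0+k^{-1}\log(k^{-n}\rho_k(\phi_0))$ and the Bergman kernel expansion of Theorem~\ref{abf} with $b_0=1$, giving $k^{-1}\log(1+O(k^{-1}))=O(k^{-2})$. Your write-up merely supplies some details the paper leaves implicit (the Cauchy--Schwarz justification of the $FS_k\circ Hilb_k$ identity and the multiplicativity of the canonical volume form under a change of weight).
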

\begin{proof}
Since ${\mathcal T}_k(\phi_0)=\phi_0+k^{-1} \log (k^{-n} \rho_k(\phi_0))$, we have
\begin{eqnarray*}
e^{-{\mathcal T}_k(\phi_0)}&=& (k^{-n} \rho_k(\phi_0))^{-1/k} e^{-\phi}\\
&=&(1+O(k^{-2}))e^{-\phi},
\end{eqnarray*}
where we used the asymptotic expansion of $\rho_k(\phi_0)$ in the last equality (cf. Theorem \ref{abf}).
\end{proof}
\begin{lemma} \label{fsm}
The term $A_1$ has an asymptotic expansion
\[
A_1=\int_X f^2 \mu_{\phi_0} \cdot k + \int_X |\partial f|_{\phi_0}^2 \mu_{\phi_0} + O(k^{-1}).
\]
\end{lemma}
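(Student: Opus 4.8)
The plan is to recognize $\mathrm{tr}(Q_{f,k}^2M)$ as, up to division by the Bergman function, the restriction to the diagonal of the Schwartz kernel of the square of a Berezin--Toeplitz operator, and then to plug into Theorems \ref{abf} and \ref{act}. Set $h_k:=kf-\Delta_{\phi_0}f\in C^\infty(X,{\mathbb R})$; the formula defining $Q_{f,k}$ says precisely that, in the reference $Hilb_k(\phi_0)$-ONB $\underline{s}_0$, the matrix $Q_{f,k}$ is (up to an immaterial conjugation) the matrix of the Toeplitz operator $T_{h_k,k}$. On the other hand, restricting $M$ to $\iota_{\underline{s}_0}(X)$ gives at $x$ the rank-one matrix $M(x)_{\alpha\beta}=(s_\beta,s_\alpha)_{k\phi_0}(x)/\rho_k(\phi_0)(x)$, i.e. the orthogonal projection onto the normalized coherent state / reproducing kernel at $x$. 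Carrying out the matrix multiplication and comparing with the definition of $K_{f,g,k}$ just before Theorem \ref{act}, one gets the pointwise identity
\[
\mathrm{tr}(Q_{f,k}^2M)(x)=\frac{K_{h_k,h_k,k}(x)}{\rho_k(\phi_0)(x)},
\]
both sides being real, so that the order of the entries and all conjugations are harmless.

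Next I would reduce the $k$-dependent symbol $h_k$ to fixed ones using linearity of $f\mapsto T_{f,k}$: since $T_{h_k,k}=k\,T_{f,k}-T_{\Delta_{\phi_0}f,k}$,
\[
K_{h_k,h_k,k}=k^2K_{f,f,k}-k\big(K_{f,\Delta_{\phi_0}f,k}+K_{\Delta_{\phi_0}f,f,k}\big)+K_{\Delta_{\phi_0}f,\Delta_{\phi_0}f,k}.
\]
Now apply Theorem \ref{act}: $K_{f,f,k}=f^2k^n+(\tfrac12 S_{\phi_0}f^2+2f\Delta_{\phi_0}f+\tfrac12|df|_{\phi_0}^2)k^{n-1}+O(k^{n-2})$; for the two mixed terms only the leading coefficient $b_{f,\Delta_{\phi_0}f,0}=f\Delta_{\phi_0}f$ is needed (the subleading part is $O(k^{n-1})$, hence $O(k^n)$ after multiplying by $k$), and $K_{\Delta_{\phi_0}f,\Delta_{\phi_0}f,k}=O(k^n)$. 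Collecting the orders $k^{n+2}$ and $k^{n+1}$, the two copies of $2f\Delta_{\phi_0}f$ cancel, leaving
\[
K_{h_k,h_k,k}=f^2k^{n+2}+\Big(\tfrac12 S_{\phi_0}f^2+\tfrac12|df|_{\phi_0}^2\Big)k^{n+1}+O(k^n).
\]
Dividing by $\rho_k(\phi_0)=k^n+\tfrac12 S_{\phi_0}k^{n-1}+O(k^{n-2})$ (Theorem \ref{abf}), the $\tfrac12 S_{\phi_0}f^2$ term cancels in turn, so $\mathrm{tr}(Q_{f,k}^2M)=f^2k^2+\tfrac12|df|_{\phi_0}^2\,k+O(1)$. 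Finally integrate against $\mu_{{\mathcal T}_k(\phi_0)}=(1+O(k^{-2}))\mu_{\phi_0}$ (Lemma \ref{acm}), multiply by $k^{-1}$, and use $|df|_{\phi_0}^2=2|\partial f|_{\phi_0}^2$ for real $f$; this gives exactly $A_1=k\int_X f^2\mu_{\phi_0}+\int_X|\partial f|_{\phi_0}^2\mu_{\phi_0}+O(k^{-1})$.

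The matrix bookkeeping in the first step and the power counting in the second are routine; the genuine content is the \emph{double cancellation} --- of $2f\Delta_{\phi_0}f$ at order $k^{n+1}$ (between $b_{f,f,1}$ and the mixed Toeplitz compositions) and of $\tfrac12 S_{\phi_0}f^2$ upon dividing by the Bergman function --- which is what turns the crude answer into the clean Dirichlet coefficient $\int_X|\partial f|_{\phi_0}^2\mu_{\phi_0}$. The one analytic point requiring care is uniformity: the expansions of Theorems \ref{abf} and \ref{act} must be used in their uniform forms, and one should note that $\Delta_{\phi_0}f$ stays in a $C^\infty$-compact family whenever $f$ (and $\phi_0$) does, so that the remainders are genuinely $O(k^{-1})$ uniformly --- which is what will subsequently underpin the uniformity statements in Theorem \ref{coh}.
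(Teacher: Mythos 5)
Your proposal is correct and follows essentially the same route as the paper: identify $\mathrm{tr}(Q_{f,k}^2M)$ with $K_{h_k,h_k,k}/\rho_k(\phi_0)$ for $h_k=kf-\Delta_{\phi_0}f$, expand via Theorems \ref{abf} and \ref{act} together with Lemma \ref{acm}, and exploit the same two cancellations (of $2f\Delta_{\phi_0}f$ at order $k^{n+1}$ and of $\tfrac12 S_{\phi_0}f^2$ against the $b_1$ term of the Bergman function) to land on $\int_X|\partial f|_{\phi_0}^2\mu_{\phi_0}$. The only difference is cosmetic bookkeeping: you divide by $\rho_k(\phi_0)$ pointwise before integrating, whereas the paper carries the factor $e^{-\mathcal{T}_k(\phi_0)}/\rho_k(\phi_0)$ into the integral and cancels $b_{f,f,0}\cdot b_1$ there.
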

\begin{proof}
We can write $M_{|_{X}}$ as
\[
M_{|_{X}}= \left( \frac{(s_{\alpha},s_{\beta})_{k\phi_0}}{\rho_k(\phi_0)} \right)_{\alpha \beta}.
\]
It follows that
\[
A_1= k^{-1} \sum_{\alpha, \beta, \gamma} Q_{\alpha \beta} Q_{\beta \gamma} \underline{M}_{\gamma \alpha},
\]
where
\[
Q_{\alpha \beta}=\int_X(kf-\Delta_{\phi_0}f)(s_{\alpha},s_{\beta})_{k\phi_0} MA(\phi_0),
\]
\[
\underline{M}_{\gamma \alpha}=\int_X \frac{(s_{\gamma},s_{\alpha})_{k\phi_0}}{\rho_k(\phi_0)} \mu_{{\mathcal T}_k(\phi_0)}.
\]
By Theorem \ref{abf} and Lemma \ref{acm}, we have
\begin{eqnarray*}
\frac{e^{-{\mathcal T}_k(\phi_0)}}{\rho_k(\phi_0)}&=&\frac{1+O(k^{-2})}{b_0k^n+b_1k^{n-1}+O(k^{n-2})}e^{-\phi_0}\\
&=& (k^{-n}-b_1k^{-n-1}+O(k^{-n-2}))e^{-\phi_0}.
\end{eqnarray*}
It follows that
\begin{eqnarray*}
A_1&=&k^{-1} \left( \int_X e^{-{\mathcal T}_k(\phi_0)} \right)^{-1} \int_X K_k \frac{e^{-{\mathcal T}_k(\phi_0)}}{\rho_k(\phi_0)}\\
&=&k^{-1} \left( \left( \int_X e^{-\phi_0} \right)^{-1}+O(k^{-2}) \right) \int_X K_k (k^{-n}-b_1k^{-n-1}+O(k^{-n-2}))e^{-\phi_0},
\end{eqnarray*}
where we put $K_k:=K_{kf-\Delta_{\phi_0}f,kf-\Delta_{\phi_0}f,k}$. Although Theorem \ref{act} valid for functions $f,g \in C^{\infty}(X,{\mathbb R})$ which are independent of $k$, we can still apply Theorem \ref{act} to get an expansion of $K_k$ since the function $kf-\Delta_{\phi_0}f$ depends linearly on $k$. Hence we obtain
\begin{eqnarray*}
K_k&=&b_{f,f,0}k^{n+2}+(-b_{f,\Delta_{\phi_0}f,0}-b_{\Delta_{\phi_0}f,f,0}+b_{f,f,1})k^{n+1}+O(k^n)\\
&=& f^2k^{n+2}+(-2f \Delta_{\phi_0}f+b_{f,f,1})k^{n+1}+O(k^n).
\end{eqnarray*}
This gives that
\begin{eqnarray*}
A_1&=&\int_X f^2 \mu_{\phi_0} \cdot k +\int_X(-2f \Delta_{\phi_0}f+b_{f,f,1}-b_{f,f,0} \cdot b_1) \mu_{\phi_0} + O(k^{-1})\\
&=& \int_X f^2 \mu_{\phi_0} \cdot k+\int_X |\partial f|_{\phi_0}^2 \mu_{\phi_0} + O(k^{-1}).
\end{eqnarray*}
\end{proof}
\begin{lemma} \label{ahf}
There is the following expansion:
\[
H(Q_{f,k})=fk+O(k^{-1}).
\]
\end{lemma}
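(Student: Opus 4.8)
The plan is to recognize $H(Q_{f,k})$ as a ratio of two quantities whose large-$k$ behavior is already under control. By definition $H(Q_{f,k})={\rm tr}(Q_{f,k}M)=\sum_{\alpha,\beta}(Q_{f,k})_{\alpha\beta}M_{\beta\alpha}$, and restricting $M$ to $X$ via $M_{|X}=\big((s_\alpha,s_\beta)_{k\phi_0}/\rho_k(\phi_0)\big)_{\alpha\beta}$ together with the entries $(Q_{f,k})_{\alpha\beta}=\int_X(kf-\Delta_{\phi_0}f)(s_\alpha,s_\beta)_{k\phi_0}MA(\phi_0)$, one identifies
\[
H(Q_{f,k})=\frac{1}{\rho_k(\phi_0)}\sum_{\alpha,\beta}\int_X(kf-\Delta_{\phi_0}f)(z)(s_\alpha,s_\beta)_{k\phi_0}(z)(s_\beta,s_\alpha)_{k\phi_0}(x)\,MA(\phi_0)(z)=\frac{K_{kf-\Delta_{\phi_0}f,k}}{\rho_k(\phi_0)},
\]
where $K_{h,k}$ is the diagonal Berezin--Toeplitz kernel of $T_{h,k}$ from Theorem \ref{afp}.

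Next I would feed the $k$-dependent symbol $h=kf-\Delta_{\phi_0}f$ into Theorem \ref{afp}; as already observed in the proof of Lemma \ref{fsm}, this is legitimate because $h$ is linear in $k$, one only has to be careful that the error terms stay $O(k^{n-1})$ after multiplying the coefficients $b_{h,j}$ by the matching powers of $k$. Using $b_{h,0}=h$ and $b_{h,1}=\tfrac12 S_{\phi_0}h+\Delta_{\phi_0}h$, the two $k^{n}$-contributions — the $-\Delta_{\phi_0}f$ part of $b_{h,0}k^n$ and the $\Delta_{\phi_0}f$ part of $b_{h,1}k^{n-1}$ — cancel, leaving
\[
K_{kf-\Delta_{\phi_0}f,k}=f\,k^{n+1}+\tfrac12 S_{\phi_0}f\,k^{n}+O(k^{n-1}),
\]
while Theorem \ref{abf} gives $\rho_k(\phi_0)=k^n+\tfrac12 S_{\phi_0}k^{n-1}+O(k^{n-2})$.

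Finally I would divide the two expansions, using $(1+\tfrac12 S_{\phi_0}k^{-1}+O(k^{-2}))^{-1}=1-\tfrac12 S_{\phi_0}k^{-1}+O(k^{-2})$, to obtain
\[
H(Q_{f,k})=\big(fk+\tfrac12 S_{\phi_0}f+O(k^{-1})\big)\big(1-\tfrac12 S_{\phi_0}k^{-1}+O(k^{-2})\big)=fk+O(k^{-1}),
\]
the key point being the cancellation of the constant ($k^{0}$) terms. The uniformity in $f$ over $C^\infty$-compact subsets of $C^\infty(X,{\mathbb R})$ and in $\phi_0$ over $C^\infty$-compact sets is inherited verbatim from the uniformity in Theorems \ref{abf} and \ref{afp}. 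There is no serious obstacle here: the only point demanding attention is the order bookkeeping when applying the Berezin--Toeplitz expansion to a symbol that grows linearly in $k$, followed by the observation that the $O(1)$ term in the quotient drops out — a simpler version of the computation already performed in Lemma \ref{fsm}.
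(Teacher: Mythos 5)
Your proposal is correct and follows essentially the same route as the paper: identify $H(Q_{f,k})=K_{kf-\Delta_{\phi_0}f,k}/\rho_k(\phi_0)$, expand the numerator via Theorem \ref{afp} applied to the $k$-linear symbol, expand the denominator via Theorem \ref{abf}, and observe that the $k^0$ terms cancel. The only cosmetic difference is that you substitute $h=kf-\Delta_{\phi_0}f$ directly into the coefficient formulas, whereas the paper uses linearity to write the coefficients as $kb_{f,j}-b_{\Delta_{\phi_0}f,j}$; the bookkeeping is identical.
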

\begin{proof}
We write $H(Q_{f,k})$ as
\begin{eqnarray*}
H(Q_{f,k})(x)&=&\sum_{\alpha, \beta} \int_X (kf-\Delta_{\phi_0}f)(s_{\alpha},s_{\beta})_{k\phi_0}(y) (s_{\beta},s_{\alpha})_{k\phi_0}(x) \frac{1}{\rho_k(\phi_0)(x)} \cdot MA(\phi_0)(y)\\
&=& \frac{1}{\rho_k(\phi_0)(x)} K_{kf-\Delta_{\phi_0}f,k}.
\end{eqnarray*}
By Theorem \ref{afp}, we know that $K_{kf-\Delta_{\phi_0}f,k}$ has an expansion
\[
K_{kf-\Delta_{\phi_0}f,k}=b_{f,0}k^{n+1}+(-b_{\Delta_{\phi_0}f,0}+b_{f,1})k^n+O(k^{n-1}).
\]
Combining with Theorem \ref{abf}, we have
\begin{eqnarray*}
H(Q_{f,k})&=&b_{f,0}k+(-b_{\Delta_{\phi_0}f,0}+b_{f,1}-b_1 \cdot b_{f,0})+O(k^{-1})\\
&=&fk+O(k^{-1}).
\end{eqnarray*}
\end{proof}
\begin{lemma} \label{ast}
The term $A_2$ has an asymptotic expansion
\[
A_2=-\int_X f^2 \mu_{\phi_0} \cdot k-\int_X f^2 \mu_{\phi_0}+O(k^{-1}).
\]
\end{lemma}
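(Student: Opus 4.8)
The plan is to substitute the expansions of Lemma \ref{ahf} and Lemma \ref{acm} into the definition
\[
A_2=-k^{-1}(1+k^{-1})\int_X H(Q_{f,k})^2\,\mu_{{\mathcal T}_k(\phi_0)}
\]
and to bookkeep the powers of $k$. The only thing requiring care is that the prefactor $k^{-1}(1+k^{-1})$ lowers the degree by one while still producing a constant coefficient from the leading term, so I must control $\int_X H(Q_{f,k})^2\,\mu_{{\mathcal T}_k(\phi_0)}$ up to and including its $O(1)$ part; in particular I must check that this integral contains no term of order $k$.

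First I would square the expansion of Lemma \ref{ahf}. Since $H(Q_{f,k})=fk+O(k^{-1})$ has \emph{no} constant term, one gets, uniformly on $X$,
\[
H(Q_{f,k})^2=f^2k^2+O(1),
\]
because the cross term $2(fk)\cdot O(k^{-1})$ is $O(1)$ and the square of the remainder is $O(k^{-2})$, and — crucially — no term of order $k$ appears. Next I would replace $\mu_{{\mathcal T}_k(\phi_0)}$ by $\mu_{\phi_0}$: by Lemma \ref{acm} we have $e^{-{\mathcal T}_k(\phi_0)}=(1+O(k^{-2}))e^{-\phi_0}$, and normalising gives $\mu_{{\mathcal T}_k(\phi_0)}=(1+O(k^{-2}))\mu_{\phi_0}$, so that
\[
\int_X H(Q_{f,k})^2\,\mu_{{\mathcal T}_k(\phi_0)}=k^2\int_X f^2\,\mu_{\phi_0}+O(1);
\]
here the $O(k^{-2})$ discrepancy between the two measures, paired with the $k^2$ term, only feeds into the $O(1)$ error, which is exactly the accuracy we can afford.

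Finally I would multiply through by $-k^{-1}(1+k^{-1})=-(k^{-1}+k^{-2})$:
\[
A_2=-(k^{-1}+k^{-2})\Bigl(k^2\int_X f^2\,\mu_{\phi_0}+O(1)\Bigr)=-k\int_X f^2\,\mu_{\phi_0}-\int_X f^2\,\mu_{\phi_0}+O(k^{-1}),
\]
since $-k^{-2}\cdot k^2\int_X f^2\,\mu_{\phi_0}$ is precisely the constant $-\int_X f^2\,\mu_{\phi_0}$ and every remaining product lands in $O(k^{-1})$. Uniformity in $f$ over $C^\infty$-compact sets and in $\phi_0$ over $C^\infty$-compact sets is inherited from the corresponding uniform statements invoked in Lemma \ref{ahf}, Lemma \ref{acm} and Theorems \ref{abf} and \ref{afp}. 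The only genuine subtlety — the ``hard part,'' such as it is — is this order-counting: one must use that the constant term of $H(Q_{f,k})$ vanishes (the cancellation $-b_{\Delta_{\phi_0}f,0}+b_{f,1}-b_1 b_{f,0}=0$ appearing in the proof of Lemma \ref{ahf}), for otherwise squaring would create an order-$k$ term that would alter the constant coefficient of $A_2$.
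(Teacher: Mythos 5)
Your proposal is correct and follows essentially the same route as the paper: square the expansion $H(Q_{f,k})=fk+O(k^{-1})$ from Lemma \ref{ahf} to get $f^2k^2+O(1)$, replace the measure via Lemma \ref{acm}, and multiply by $-(k^{-1}+k^{-2})$, with the vanishing of the constant term of $H(Q_{f,k})$ being exactly what prevents an order-$k$ contribution to $H(Q_{f,k})^2$ from polluting the constant coefficient. Your write-up simply makes the order-counting more explicit than the paper's three-line computation.
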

\begin{proof}
By Lemma \ref{acm} and Lemma \ref{ahf}, we have
\begin{eqnarray*}
A_2&=&-k^{-1}(1+k^{-1}) \left( \int_X e^{-{\mathcal T}_k(\phi_0)} \right)^{-1} \cdot \int_X H(Q_{f,k})^2 e^{-{\mathcal T}_k(\phi_0)}\\
&=&-(k^{-1}+k^{-2})\left( \left( \int_X e^{-\phi_0} \right)^{-1}+O(k^{-2}) \right) \int_X (f^2k^2+O(1))(1+O(k^{-2}))e^{-\phi_0}\\
&=& -\int_X f^2 \mu_{\phi_0} \cdot k-\int_X f^2 \mu_{\phi_0}+O(k^{-1}).
\end{eqnarray*}
\end{proof}
\begin{lemma} \label{cst}
The term $A_3$ has an asymptotic expansion
\[
A_3=\left( \int_X f \mu_{\phi_0} \right)^2 +O(k^{-1}).
\]
\end{lemma}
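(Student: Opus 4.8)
The plan is to combine the two estimates already in hand — the pointwise expansion $H(Q_{f,k})=fk+O(k^{-1})$ from Lemma \ref{ahf} and the volume-form expansion $e^{-{\mathcal T}_k(\phi_0)}=(1+O(k^{-2}))e^{-\phi_0}$ from Lemma \ref{acm} — in the same spirit as the proof of Lemma \ref{ast}, and then square.

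First I would note that since both $e^{-{\mathcal T}_k(\phi_0)}$ and its integral $\int_X e^{-{\mathcal T}_k(\phi_0)}$ agree with $e^{-\phi_0}$ and $\int_X e^{-\phi_0}$ up to a factor $1+O(k^{-2})$, the normalized probability measures satisfy $\mu_{{\mathcal T}_k(\phi_0)}=(1+O(k^{-2}))\mu_{\phi_0}$. Substituting this together with Lemma \ref{ahf} into the integral defining $A_3$ gives
\[
\int_X H(Q_{f,k})\,\mu_{{\mathcal T}_k(\phi_0)}=\int_X (fk+O(k^{-1}))(1+O(k^{-2}))\,\mu_{\phi_0}=k\int_X f\,\mu_{\phi_0}+O(k^{-1}),
\]
the error surviving the integration because the $O(k^{-1})$ in Lemma \ref{ahf} is uniform and $\mu_{\phi_0}$ is a probability measure.

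Then I would square and multiply by $k^{-2}$:
\[
A_3=k^{-2}\left(k\int_X f\,\mu_{\phi_0}+O(k^{-1})\right)^2=k^{-2}\left(k^2\left(\int_X f\,\mu_{\phi_0}\right)^2+O(1)\right)=\left(\int_X f\,\mu_{\phi_0}\right)^2+O(k^{-2}),
\]
which is in fact sharper than the claimed $O(k^{-1})$. The uniformity assertions — for $f$ ranging over a $C^{\infty}$-compact subset of $C^{\infty}(X,{\mathbb R})$ and for $\phi_0$ in a $C^{\infty}$-compact set — are inherited directly from the uniformity in Theorems \ref{abf} and \ref{afp}, which are the only inputs used (via Lemmas \ref{acm} and \ref{ahf}).

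I do not expect any real obstacle: $A_3$ is the easiest of the three term-by-term estimates, since only the leading coefficient $fk$ of $H(Q_{f,k})$ enters — there is no need to track a subleading Toeplitz coefficient or a correction from $b_1$, in contrast to the analysis of $A_1$ in Lemma \ref{fsm}. The one point deserving a line of care is confirming that the error terms are uniform enough to be integrated against $\mu_{\phi_0}$, but this is immediate from the quoted uniform expansions.
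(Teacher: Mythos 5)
Your proposal is correct and follows the paper's own argument essentially verbatim: both compute $\int_X H(Q_{f,k})\,\mu_{{\mathcal T}_k(\phi_0)}=k\int_X f\,\mu_{\phi_0}+O(k^{-1})$ from Lemmas \ref{acm} and \ref{ahf} and then square after multiplying by $k^{-2}$. Your observation that the resulting error is actually $O(k^{-2})$ is a valid (if unneeded) sharpening.
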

\begin{proof}
By Lemma \ref{acm} and Lemma \ref{ahf}, we have
\begin{eqnarray*}
\int_X H(Q_{f,k}) \mu_{{\mathcal T}_k(\phi_0)}&=&\left( \left( \int_X e^{-\phi_0} \right)^{-1}+O(k^{-2}) \right) \int_X (fk+O(k^{-1}))(1+O(k^{-2}))e^{-\phi_0}\\
&=& \int_X f \mu_{\phi_0} \cdot k +O(k^{-1}).
\end{eqnarray*}
It follows that
\[
A_3=\left( \int_X f \mu_{\phi_0} \right)^2 +O(k^{-1}).
\]
\end{proof}
\begin{proof}[Proof of Theorem \ref{coh}]
Combining Lemma \ref{fsm}, Lemma \ref{ast} and Lemma \ref{cst}, we have
\begin{eqnarray*}
\nabla^2{\mathcal D}^{(k)}{}_{|H_0}(Q_{f,k},Q_{f,k})&=&A_1+A_2+A_3\\
&=& \int_X |\partial f|_{\phi_0}^2 \mu_{\phi_0}-\int_X f^2 \mu_{\phi_0}+\left( \int_X f \mu_{\phi_0} \right)^2 +O(k^{-1})\\
&=& \nabla^2 {\mathcal D}_{|\phi_0}(f,f)+O(k^{-1}).
\end{eqnarray*}
Finally, the uniformity of convergence follows from the analogous uniformity of Theorem \ref{abf}, Theorem \ref{afp} and Theorem \ref{act}.
\end{proof}

\end{document}